\pgfplotsset{
  every axis/.append style = {thick},tick style = {thick,black},
  %
  % #1 = x, y, or z
  % #2 = the shift value
  /tikz/normal shift/.code 2 args = {%
    \pgftransformshift{%
        \pgfpointscale{#2}{\pgfplotspointouternormalvectorofticklabelaxis{#1}}%
    }%
  },%
  range3frame/.style = {
    tick align        = outside,
    scaled ticks      = false,
    enlargelimits     = false,
    ticklabel shift   = {10pt},
    axis lines*       = left,
    line cap          = round,
    clip              = false,
    xtick style       = {normal shift={x}{10pt}},
    ytick style       = {normal shift={y}{10pt}},
    ztick style       = {normal shift={z}{10pt}},
    x axis line style = {normal shift={x}{10pt}},
    y axis line style = {normal shift={y}{10pt}},
    z axis line style = {normal shift={z}{10pt}},
  }
}
\newcommand{\eps}{\varepsilon}
\newcommand{\tr}{\mathbf{Tr}}
\numberwithin{equation}{section}
\newtheorem{definition}{Definition}
\newtheorem{theorem}[definition]{Theorem}
\newtheorem{lemma}[definition]{Lemma}
\newtheorem{corollary}[definition]{Corollary}
\newtheorem{remark}[definition]{Remark}
\newtheorem{proposition}[definition]{Proposition}
\newtheorem{conjecture}[definition]{Conjecture}
\newtheorem{assumption}[definition]{Assumption}
\newcommand{\cB}{\mathcal{B}}
\newcommand{\cP}{\mathcal{P}}
\DeclareMathOperator*{\argmin}{argmin}
\newcommand{\fP}{\mathbf{P}}
\newcommand{\R}{\mathbf{R}}
\newcommand{\fS}{\mathbf{S}}
\newcommand{\bX}{\mathbb{X}}
\newcommand{\bD}{\mathbb{D}}
\newcommand{\vect}{\textbf{vec}}
\providecommand{\NB}{\color{red}$\spadesuit$}
\newcommand{\T}{\mathbb{T}}
\newcommand{\pen}{p}
\def\d{{\rm d}}
\def\Var{\operatorname{Var}}
\newcommand{\Ts}{\Theta_\star}
\newcommand{\Ind}{{\bf 1}}
\newcommand{\Lm}{\mathcal{L}}
\newcommand{\Th}{\hat\Theta}
\newcommand{\Sigh}{\hat{\Sigma}}
\newcommand{\Sigs}{\Sigma_\star}
\newcommand{\ex}{e}
\newcommand{\bigO}{\mathcal{O}}
\DeclareMathOperator{\rank}{\mathbf{rank}}
\newcommand{\NP}{\mathsf{NP}}
\newcommand{\G}{\mathcal{G}}
\renewcommand{\P}{\mathbf{P}}
\newcommand{\Kc}{{\sf KL}}
\newcommand{\Lc}{\ell_Y}
\newsavebox{\@brx}
\newcommand{\llangle}[1][]{\savebox{\@brx}{\(\m@th{#1\langle}\)}%
	\mathopen{\copy\@brx\mkern2mu\kern-0.9\wd\@brx\usebox{\@brx}}}
\newcommand{\rrangle}[1][]{\savebox{\@brx}{\(\m@th{#1\rangle}\)}%
	\mathclose{\copy\@brx\mkern2mu\kern-0.9\wd\@brx\usebox{\@brx}}}
\newcommand{\card}{\operatorname{\mathsf{card}}}
\newcommand{\h}{{\rm I}\kern-0.18em{\rm H}}
\newcommand{\K}{{\rm I}\kern-0.18em{\rm K}}
\newcommand{\p}{{\rm I}\kern-0.18em{\rm P}}
\newcommand{\E}{{\rm I}\kern-0.18em{\rm E}}
\newcommand{\Z}{{\rm Z}\kern-0.18em{\rm Z}}
\newcommand{\1}{{\rm 1}\kern-0.24em{\rm I}}
\newcommand{\N}{{\rm I}\kern-0.18em{\rm N}}
\definecolor{MIT}{RGB}{163,31,52}
\crefname{theorem}{Theorem}{Theorems}
\crefname{assumption}{Assumption}{Assumptions}
\crefname{observation}{Observation}{Observations}
\crefname{claim}{Claim}{Claims}
\crefname{condition}{Condition}{Conditions}
\crefname{example}{Example}{Examples}
\crefname{fact}{Fact}{Facts}
\crefname{lemma}{Lemma}{Lemmas}
\crefname{corollary}{Corollary}{Corollaries}
\crefname{definition}{Definition}{Definitions}
\crefname{remark}{Remark}{Remarks}
\DeclareMathOperator{\rk}{\mathbf{rank}}
\newcommand*{\defeq}{\mathrel{\rlap{%
                     \raisebox{0.3ex}{$\m@th\cdot$}}%
                     \raisebox{-0.3ex}{$\m@th\cdot$}}%
                    =}
\newcommand*{\eqdef}{=
  \mathrel{\rlap{%
      \raisebox{0.3ex}{$\m@th\cdot$}}%
    \raisebox{-0.3ex}{$\m@th\cdot$}}%
}
\begin{document}

\begin{frontmatter}

% "Title of the paper"
\title{Optimal Link Prediction \\ with Matrix Logistic Regression}
\runtitle{Optimal Link Prediction with Matrix Logistic Regression}

\begin{aug}

\author{\fnms{Nicolai}~\snm{Baldin}\thanksref{t1}\ead[label=baldin]{n.baldin@statslab.cam.ac.uk}}
\and
\author{\fnms{Quentin}~\snm{Berthet}\thanksref{t2}\ead[label=berthet]{q.berthet@statslab.cam.ac.uk}}

\affiliation{University of Cambridge}
\thankstext{t1}{Supported by the European Research Council (ERC) under the European Union's Horizon 2020 research and innovation programme (grant agreement No 647812)}
\thankstext{t2}{Supported by an Isaac Newton Trust Early Career Support Scheme and by The Alan Turing Institute under the EPSRC grant EP/N510129/1.}

\address{{Nicolai Baldin}\\
{Department of Pure Mathematics} \\
{and Mathematical Statistics,} \\
{University of Cambridge}\\
{Wilbeforce Road}\\
{Cambridge, CB3 0WB, UK}\\
\printead{baldin}
}

\address{{Quentin Berthet}\\
{Department of Pure Mathematics} \\
{and Mathematical Statistics,} \\
{University of Cambridge}\\
{Wilbeforce Road}\\
{Cambridge, CB3 0WB, UK}\\
\printead{berthet}
}

\runauthor{Baldin and Berthet}
\end{aug}

\begin{abstract}
We consider the problem of link prediction, based on partial observation of a large network, and on side information associated to its vertices. The generative model is formulated as a matrix logistic regression. The performance of the model is analysed in a high-dimensional regime
under a structural assumption. The minimax rate for the Frobenius-norm risk is established and a combinatorial estimator based on the penalised maximum likelihood approach is shown to achieve it. Furthermore, it is shown that this rate cannot be attained by any (randomised) algorithm computable in polynomial time under a computational complexity assumption. 
%and the so-called computational lower bound is not sensitive to the rank constraint at all! The proof 
%of the computational lower bound rests upon  a reduction scheme to the dense subgraph detection problem.

%This paper focuses on statistical and computational barriers in a simple 
%matrix logistic regression model. The performance of the model is analysed in a high-dimensional regime
%under a structural assumption on the target matrix. The matrix is assumed to be symmetric with low rank and only one block
%of non-zero coefficients of unknown location. The minimax rate for the Frobenius norm risk is established and a combinatorial estimator based on the penalised maximum likelihood approach is shown to achieve the minimax rate.
%Furthermore, it is shown that the minimax rate cannot be attained  by any (randomised) algorithm computable in polynomial time 
%and the so-called computational lower bound is not sensitive to the rank constraint at all! The proof 
%of the computational lower bound rests upon  a reduction scheme to the dense subgraph detection problem.
%The ideas to generalise the findings to a variety of statistical models including a sparse generalised linear model, a graphon model 
%and a trace regression model are provided. 
%\ndqb{shorten}
\end{abstract}

	\begin{keyword}[class=MSC]
	\kwd[Primary ]{62J02}
	\kwd{62C20}
	\kwd[; secondary ]{68Q17}
\end{keyword}

\begin{keyword}[class=KWD]
Link prediction, Logistic regression, %Block-sparsity, Low-rank, Penalised MLE, 
Computational lower bounds, Planted clique
%Dense subgraph detection
%\ndqb{shorten}
\end{keyword}

%\begin{keyword}
%\kwd{}
%\kwd{}
%\end{keyword}

\end{frontmatter}

	\section*{Introduction}

In the field of network analysis, the task of {\em link prediction} consists in predicting the presence or absence of edges in a large graph, based on the observations of some of its edges, and on side information. Network analysis has become a growing inspiration for statistical problems. Indeed, one of the main characteristics of datasets in the modern scientific landscape is not only their growing size, but also their increasing complexity. Most phenomena now studied in the natural and social sciences concern not only isolated and independent variables, but also their interactions and connections. 

The fundamental problem of link prediction is therefore naturally linked with statistical estimation: the objective is to understand, through a generative model, why different vertices are connected or not, and to generalise these observations to the rest of the graph.

Most statistical problems based on graphs are unsupervised: the graph itself is the sole data, there is no side information, and the objective is to recover an unknown structure in the generative model. Examples include the planted clique problem \citep{Kuc95,AloKriSud98}, the stochastic block model \citep{HolLasLei83}---see~\cite{Abb17} for a recent survey of a very active line of work \citep{DecKrzMoo11,MosNeeSly13, Mas14, MosNeeSly15, AbbSan15,BanMoo16}, the Ising blockmodel \citep{BerRigSri16}, random geometric graphs -- see \cite{Pen03} for an introduction and \citep{DevGyoLug11,BubDinEld14} for recent developments in statistics, or metric-based learning \citep{Chen09, Bellet14asurvey} and ordinal embedings \citep{JaiJamNow16}.  

% with understanding the connections between variables.

%The phenomena studied in the natural and social sciences are more and more complicated, and their understanding requires new statistical models and models.

%. We are now often concerned not only with isolated and independent data points, but also with interactions, with understanding the connections between variables. The field of network analysis has therefore inspired many problems in statistics related to the relationship between couples of variables. One of its most fundamental problems is that of link prediction: trying to understand and predict why different nodes (vertices) connect in a network.

In supervised regression problems on the other hand,  the focus is on understanding a fundamental mechanism, formalized as the link between two variables. The objective is to learn how an explanatory variable $X$ allows to predict a response $Y$, i.e. to find the unknown function $f$ that best approximates the relationship $Y \approx f(X)$. This statistical framework is often applied to the observation of a phenomenon measured by $Y$ (e.g. of a natural or social nature), given known information $X$: the principle is to understand said phenomenon, to explain the relationship between the variables by estimating the function $f$  \citep{Holland81, Hoff2002}.

We follow this approach here: our goal is to learn how known characteristics of each agent (represented by a node) in the network induce a greater or smaller chance of connection, to understand the mechanism of formation of the graph. We propose a model for supervised link prediction, using the principle of regression for inference on graphs. For each vertex, we are given side information, a vector of observations $X \in \R^d$. Given observations $X_i, X_j$ about nodes $i$ and $j$ of a network, we aim to understand how these two explanatory variables are related to the probability of connection between the two corresponding vertices, such that $\fP(Y_{(i,j)} = 1) = f(X_i,X_j)$, by estimating $f$ within a high-dimensional class based on logistic regression. Besides this high-dimensional parametric modelling, various fully  non-parametric statistical frameworks were exploited in the literature, see, for example, \cite{wolfe2013nonparametric, gao2015rate}, for graphon estimation, \cite{PapaNIPS2016, Biau2006} for graph reconstruction and \cite{Bickel21068} for modularity analysis.

Link prediction can be useful in any application where data can be gathered about the nodes of a network. One of the most obvious motivations is in social networks, in order to model social interactions. With access to side information about each member of a social network, the objective is to understand the mechanisms of connection between members: shared interests, differences in artistic tastes or political opinion \citep{wasserman_faust_1994}. This can also be applied to citation networks, or in the natural sciences to  biological networks of interactions between molecules or proteins \citep{Yu2008, Madeira2004}. The key assumption in this model is that the network is a consequence of the information, but not necessarily based  on similarity: it is possible to model more complex interactions, e.g. where opposites attract. %This feature is discussed more in detail in Section~\ref{SEC:desc}.

The focus on a high-dimensional setting is another aspect of this work that is also motivated by modern applications of statistics: data is often collected without discernment and the ambient dimension $d$ can be much larger than the sample size. This setting is common in regression problems: the underlying model is often actually very simple, to reflect the fact that only a small number of measured parameters are relevant to the problem at hand, and that the intrinsic dimension is much smaller. This is usually handled through an assumption on the rank, sparsity, or regularity of a parameter. Here this needs to be adapted to a model with two covariates (explanatory variables) and a structural assumption is made in order to reflect this nature of our problem.

We therefore decide to tackle  link prediction by modelling it as matrix logistic regression. We study a generative model for which $\fP(Y_{(i,j)}=1) = \sigma(X_i^\top \Theta_\star X_j)$, where $\sigma$ is the sigmoid function, and $\Theta_\star$ is the unknown matrix to estimate. It is a simple way to model how the variables {\em interact}, by a quadratic {\em affinity} function and a sigmoid function. In order to model realistic situations with partial observations, we assume that $Y_{(i,j)}$ is only observed for a subset of all the couples $(i,j)$, denoted by $\Omega$.

%same model for all pairs: only change is in the variables.

%Since we are in a high-dimensional setting, we must make several assumptions on $\Theta_\star$. 
To convey the general idea of a simple dependency on $X_i$ and $X_j$, we make structural assumptions on the rank and sparsity of $\Theta_\star$. This reflects that the affinity $X_i^\top \Theta_\star X_j$ is a function of the projections $u_\ell^\top X$ for the vectors $X_i$ and $X_j$, for a small number of orthogonal  vectors, that have themselves a small number of non-zero coefficients (sparsity assumption).
In order to impose that the inverse problem is well-posed, we also make a restricted conditioning assumption on $\Theta_\star$, inspired by the restricted isometry property (RIP). These conditions are discussed in Section~\ref{SEC:desc}. %We note again that a semidefinite positive assumption would correspond to metric learning. 
We talk of link prediction as this is the legacy name but we focus almost entirely on the problem of estimating $\Theta_\star$.

The classical techniques of likelihood maximization can lead to computationally intractable optimization problems. We show that in this problem as well as others this is a fundamental difficulty, not a weakness of one particular estimation technique; statistical and computational complexities are intertwined.\\

{\bf Our contribution:} This work is organized in the following manner: We give a formal description of the problem in Section~\ref{SEC:desc}, as well as a discussion of our assumptions and links with related work. Section~\ref{SEC:algo} collects our  main statistical results. 	We propose an estimator \(\hat{\Theta}\) based on the penalised maximum likelihood approach and  analyse its performance in Section~\ref{SEC:penLogLoss} in terms of non-asymptotic rate of estimation. We show that it attains the minimax rate of estimation over  simultaneously
block-sparse and low-rank matrices $\Theta_\star$, but is not computationally tractable. 
In Section \ref{SEC:feedback}, 
we provide a convex relaxation of the problem which is in essence the \emph{Lasso} estimator applied to a vectorised version of the problem. 
The link prediction task is covered in Section~\ref{SEC:slow}.
A matching minimax lower bound for the rate of estimation is given in Section~\ref{MinimaxLowerBound}. 
Furthermore, we show in Section~\ref{SEC:complow} that the minimax rate cannot be attained by a (randomised) polynomial-time algorithm, and we identify a corresponding computational lower bound. 
The proof of this bound is based on a reduction scheme from the so-called \emph{ dense subgraph detection problem}. 
Technical proofs are deferred to the appendix.
Our findings are depicted in Figure~\ref{HB}.  \\
\begin{figure}[tp]
	\centering
	\begin{tikzpicture}
	%\draw (0,0) rectangle (4,4);
	%\draw [black, fill = gray!20!white] (0,1.5) -- (4,3.5) -- (4,4) -- (0,4) -- (0,1.5);
	%\draw [black, fill = gray!40!white]  (0.1,3) to [out=87,in=150] (1,3.7) -- (0.85,3.15) -- (0.1,3);
	%\draw [rotate around={15:(1.2,3.4)},black, fill = gray!60!white] (1.2,3.4) ellipse (1.1 and 0.4);
	%\draw [black, fill] (2,2) circle [radius=0.05];
	%	\filldraw[fill=gray!40!white, draw=black] (0,2) rectangle (5.5,2.5);
	\node  at (7.5,2.3) {\text{ \scriptsize Lasso} };
	\node  at (7.5,1.5) {\(\scalebox{1.2}{$  \frac{k^2}{N} \log(d)$} \)};
	%\node at (3.2,3.5) {\(H_{\delta,v}\)};
	\node at (5.5,1.5) {\( \scalebox{1.2}{$\frac{k^2}{N}$} \)};
	\draw[decoration={brace,raise=5pt},decorate]
	(0,2.5) -- node[above=6pt] {\text{\scriptsize computationally hard} } (5.5,2.5);
	%\node at (1.4,2.5) {\(v\)};
	\draw [-, thick] (0,2) -- (8,2);
	\draw [-, thick] (5.5,1.9) -- (5.5,2.4);
	\draw [-, thick] (7.5,1.9) -- (7.5,2.1);
	\draw [-, thick] (2.7,1.9) -- (2.7,2.1);
	\node  at (2.7,2.3) {\text{ \scriptsize minimax} };
	\node at (2.7,1.5) {\( \scalebox{1.2}{$ \frac{kr}{N} + \frac{k}{N}\log(\frac{d\ex}{k})$}\)};
	%\draw [black, dashed] (0,1.25) -- (4,3.25);
	%\node[text width=2cm] at (1.3,1) {\scriptsize separating \\ hyperplane};
	\end{tikzpicture}
	\caption{The computational and statistical boundaries for estimation and prediction in the matrix logistic regression model. Here \(k\) denotes the sparsity of \(\Ts\) and \(r\)  its rank, while  \(N\) is the number of observed edges in the network.}
	\label{HB}
\end{figure}
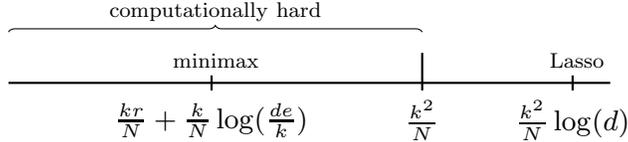

\noindent 
\textbf{Notation:} For any positive integer $n$, we denote by $[n]$ the set $\{1,\ldots,n\}$ and by $[[n]]$ the set of couples of $[n]$, of cardinality ${n \choose 2}$. We denote by $\R$ the set of real numbers and by $\fS^d$ the set of real symmetric matrices of size $d$. For a matrix $A\in \fS^d$, we denote by $\|A\|_F$ its Frobenius norm, defined by
\[
\|A\|_F^2 = \sum_{i,j \in [d]}A_{ij}^2\, .
\]
We extend this definition for \(B \in\fS^n \) and any subset $\Omega \subseteq [[n]]$ to its semi-norm $\|B\|_{F,\Omega}$ defined by
\[
\|B\|_{F,\Omega}^2 = \sum_{i,j\, : \, (i,j) \in \Omega}B_{ij}^2\, .
\]
The corresponding bilinear form playing the role of inner-product of two matrices \(B_1, B_2 \in \fS^n \) is denoted as  \(\llangle B_1, B_2\rrangle_{F,\Omega}\).  For a matrix $B\in \fS^n$, we also make use of the following matrix norms and pseudo-norms for \(p,q \in [0, \infty)\), with \(\|B\|_{p,q} = \big\| (\|B_{1*}\|_p  \cdots \|B_{d*}\|_p) \big\|_q\), where \(B_{i*}\) denotes the \(i\)th row of $B$, and \(\| B \|_{\infty} = \max_{(i,j) \in [d]}  |B_{ij}| \).
%\ndqb{ASK is it an inner-product?}
%For asymptotic bounds,
%\ndqb{are results here asymptotic?}
% we shall write \(f(x) = O(g(x))\) or \(f(x) \lesssim g(x)\) if \(f(x)\) is bounded by a constant multiple of \(g(x)\)  and
%\(f(x) \thicksim g(x)\) if \(f(x) \lesssim g(x)\)  as well as \(g(x) \lesssim f(x)\).
For asymptotic bounds,
%\ndqb{are results here asymptotic?}
we shall write  \(f(x) \lesssim g(x)\) if \(f(x)\) is bounded by a constant multiple of \(g(x)\).

\section{Problem description}
\label{SEC:desc}

\subsection{Generative model}
For a set of vertices $V = [n]$ and explanatory variables $X_i \in \R^d$ associated to each $i \in V$, a random graph $G=(V,E)$ is generated by the following model. For all $i,j \in V$, variables $X_i,X_j \in \R^d$ and an unknown matrix $\Theta_\star \in \fS_d$, an edge connects the two vertices $i$ and $j$ independently of the others according to the distribution
\begin{equation}
\label{eq:genModel}
\fP\big((i,j) \in E \big) = \sigma(X_i^\top \Theta_\star X_j) = \frac{1}{1+\exp(-X_i^\top \Theta_\star X_j)}\, .
\end{equation}
Here we denote by $\sigma$ the {\em sigmoid}, or {\em logistic} function. 
\begin{definition}
	\label{DefProbs}
	We denote by $\pi_{ij}: \fS_d \to [0,1]$ the function mapping a matrix $\Theta \in  \fS_d$ to the probability in \eqref{eq:genModel}. Let \(\Sigma \in \fS_n \) with \(\Sigma_{ij} = X_i^\top \Theta X_j \) denote the so-called affinity matrix. In particular, we then have \(\pi_{ij}(\Theta) = \sigma(\Sigma_{ij})\). 
\end{definition}
Our observation consists of the explanatory variables $X_i$ and of the observation of a subset of the graph. Formally, for a subset $\Omega \subseteq [[n]]$, we observe an adjacency vector $Y$ indexed by ${\Omega}$ that satisfies, for all $(i,j)\in \Omega$, $Y_{(i,j)} =1$ if and only if $(i,j) \in E$  (and 0 otherwise). We thus have 
\begin{equation}
\label{eq:genModel_2}
Y_{(i,j)} \sim \text{Bernoulli}\big(\pi_{ij}(\Ts)\big)\,, \quad (i,j)\in \Omega \,.
\end{equation}
The joint data distribution is denoted by \(\P_{\Ts}\)  and is thus completely specified by \(\pi_{ij}(\Ts)\), \((i,j)\in \Omega\).  For ease of notation, we write $N=|\Omega|$, representing the {\em effective sample size}.
Our objective is to estimate the parameter matrix $\Theta_\star$, based on the observations $Y \in \R^N$ and on known explanatory variables $\bX \in \R^{d \times n}$.

This problem can be reformulated as a classical logistic regression problem. Indeed, writing $\vect(A) \in \R^{d^2}$ for the {\em vectorized} form of a matrix $A\in \fS^d$, we have that
\begin{equation}\label{eq:vectorNot}
X_i^\top \Theta_\star X_j = \tr(X_j X_i^\top \Theta_\star) = \langle \vect(X_j X_i^\top), \vect(\Theta_\star) \rangle\, .
\end{equation}
The vector of observation $Y \in \R^{N}$ therefore follows a logistic distribution with explanatory design matrix $\bD_\Omega \in \R^{N \times d^2}$ such that $\bD_{\Omega \, (i,j)} = \vect(X_j X_i^\top)$ and predictor $\vect(\Theta_\star) \in \R^{d^2}$. We focus on the matrix formulation of this problem, and consider directly {\em matrix logistic regression} in order to simplify the notation of the explanatory variables and our model assumptions on $\Theta_\star$, that are specific to matrices. 

%This model can be compared to other settings in the statistical and learning literature.
\subsection{Comparaison with other models}
This model can be compared to other settings in the statistical and learning literature.

%{\bf Link prediction\\} Exisiting model

{\bf Generalised linear model.}
As discussed above in the remark to  \eqref{eq:vectorNot}, this is an example of a logistic regression model. We focus in this work on the case where the matrix $\Theta_\star$ is block-sparse. %it is natural to use vectorisation \eqref{eq:vectorNot} for the matrix logistic regression model. However, 
%the block-sparsity and low-rank constraints are hard to trace in the vector notation 
%and an appropriate analysis has not been conducted so far.
%The literature on sparse generalised linear models, and sparse logistic regression in particular, is  extensive.
The problem of sparse generalised linear models, and sparse logistic regression in particular has been extensively studied
%We cannot hope to cover all contributions but a partial list would include 
\citep[see, e.g.][and references therein]{van2008high, bunea2008honest,meier2008group,bach2010self, rigollet2012kullback, abramovich2014}. Our work focuses on the more restricted case of block-sparse and low-rank parameter, establishing interesting statistical and computational phenomena in this setting.

{\bf Graphon model.} The graphon model is a model of a random graph in which the explanatory variables associated with the vertices in the graph are unknown. %\(G = (V,E)\)
% in which 
%the probability of the link between any pair of vertices \({i,j} \in V\) in the graph hinges upon a function of independent 
%uniformly distributed random variables \(u_i, u_j \overset{i.i.d.}{\sim} U([0,1]^d)\) associated with the 
%vertices \(i,j\), i.e. 
%\[
%	\P(\{i,j\} \in E ) = f(u_i, u_j)\,,
%\]
%where  \(f\) is a symmetric measurable function \(f: [0,1]^{2d} \to [0,1]\).
%The graphon model is a fundamental object of study in random graph theory as it allows to rigorously 
%define limits of random graphs. We refer to a book \cite{lovasz2012large} for 
%a comprehensive study of the active topic of graph limits.
It has recently  become popular in the statistical community, see \cite{wolfe2013nonparametric, klopp2015oracle, gao2015rate, zhang2015estimating}. Typically, an objective of statistical inference is a link function which belongs  to either  
a parametric or non-parametric class of functions. Interestingly,  the minimax lower bound for the classes of 
H\"older-continuous functions, obtained 
in \cite{gao2015rate}, has not been attained by any polynomial-time algorithm.

{\bf Trace regression models.} The modelling assumption \eqref{eq:genModel} of the present paper is in fact very close to the trace regression model, as it follows from the representation  \eqref{eq:vectorNot}.
Thus, the block-sparsity and low-rank structures are preserved and can well be studied by the means of techniques developed 
for the trace  regression. %However, to the best of our knowledge, the matrix logistic regression model has not been studied yet. 
We refer to \cite{koltchinskii2011, negahban2011estimation, rohde2011estimation, fan2016robust} for recent developments in the linear trace regression model, and 
\cite{fan2017} for the generalised trace regression model. However, computational lower bounds have not been studied either and many existing minimax optimal estimators cannot to be computed in polynomial time.

{\bf Metric learning.} In the task of metric learning, observations depend on an unknown geometric representation $V_1,\ldots,V_n$ of the variables in a Euclidean space of low dimension. The goal is to estimate this representation (up to a rigid transformation), based on noisy observations of $\langle V_i,V_j \rangle$ in the form of random evaluations of similarity. Formally, our framework also recovers the task of metric learning by taking $X_i = e_i$ and $\Theta_\star$  an unknown semidefinite positive matrix of small rank (here $V^\top V$), since
\[
\langle V_i,V_j \rangle = \langle Ve_i,Ve_j \rangle = e_i^\top \, V^\top  V \, e_j \, . 
\]
We refer to \citep{Chen09, Bellet14asurvey} and references therein for a comprehensive survey of metric learning methods.

\subsection{Parameter space}
\label{SEC:para}
The unknown predictor matrix $\Theta_\star$ describes the relationship between the observed features $X_i$ and the probabilities of connection $\pi_{ij}(\Theta_\star) = \sigma(X_i^\top \Theta_\star X_j)$ following Definition~\ref{DefProbs}. %However, we focus here on a setting where the $X_i$ are general vectors of $\R^d$, and where $\Theta_\star$ is not constrained to represent a metric embedding.
We focus on the high-dimensional setting where $d^2 \gg N$: the number of features for each vertex in the graph, and number of free parameters, is much greater than the total number of observations. In order to counter the curse of dimensionality, we make the assumption that the function $(X_i,X_j) \mapsto \pi_{ij}$ depends only on a small subset $S$ of size $k$ of all the coefficients of the explanatory variables. This translates to a {\em block-sparsity} assumption on $\Theta_\star$: the coefficients $\Theta_{\star \, ij}$ are only nonzero for $i$ and $j$ in $S$. 
Furthermore,
we assume that the rank of the matrix $\Ts$ can be smaller than the
size of the block. 
Formally, we define the following parameter spaces
\[
\cP_{k,r}(M) = \Big\{ \Theta \in \fS^d \, : \, \|\Theta\|_{1,1} < M \; \, , \, \|\Theta\|_{0,0} \le k \; ,\, \text{and} \; \rk(\Theta) \le r\Big\}\, ,
\]
for the coefficient-wise $\ell_1$ norm $\|\cdot\|_{1,1}$ on $\fS^d$
%, $M>0$ 
and integers $k,r \in [d]$. We also denote \(\cP(M) = \cP_{d,d}(M) \) for convenience.

\begin{remark}
	\label{RemarkParSpa}
	The bounds on block-sparsity and rank in our parameter space are structural bounds: we consider the case where the matrix $\Theta_\star$ can be concisely described in terms of the number of parameters. This is motivated by considering the spectral decomposition of the real symmetric matrix $\Ts$ as
	\[
	\Ts = \sum_{\ell =1}^r \lambda_\ell u_\ell u_\ell^\top\, .		
	\]
	The affinity $\Sigma_{ij} = X_i^\top \Ts X_j$ between vertices $i$ and $j$ is therefore only a function of the projections of $X_i$ and $X_j$ along the axes $u_\ell$, i.e.
	\[
	\Sigma_{ij} = X_i^\top \Ts X_j = \sum_{\ell =1}^r \lambda_\ell (u_\ell^\top X_i) (u_\ell^\top X_j)\, .
	\]	
	Assuming that there are only a few of these directions $u_\ell$ with non-zero impact on the affinity motivates the low-rank assumption, while assuming that there are only few relevant coefficients of $X_i, X_j$ that influence the affinity corresponds to a sparsity assumption on the $u_\ell$, or block sparsity of $\Ts$. The effect of these projections on the affinity is weighted by the $\lambda_\ell$. By allowing for negative eigenvalues, we allow our model to go beyond a geometric description, where close or similar $X$s are more likely to be connected. This can be used to model interactions where opposites attract. 		
	%The bound in the norm $\|\cdot\|_{1,1}$ is necessary: if $X_i^\top \Theta_\star X_j$ can be arbitrarily large in magnitude, $\pi_{ij} = \sigma(X_i^\top \Theta_\star X_j)$ can be arbitrarily close to $0$ or $1$. Since our observations only depend on $\Theta_\star$ through its image $\pi_{ij}$, this could lead to a very large estimation error on $\Theta_\star$ even with a small estimation error on the $\pi_{ij}$. 
\end{remark}

The assumption of simultaneously sparse and low-rank matrices arises naturally in many applications in statistics and machine learning and has attracted considerable recent attention.
Various regularisation techniques have been developed for estimation, variable and rank selection in multivariate regression problems \citep[see, e.g.][and the references therein]{bunea2012joint, richard2012estimation}.

\begin{comment}
Let us also 
define \(\G(M) = \cup_{k} \G_k(M)\).
Clearly, if the support of  \(\Ts \in \G_k(M)\) is known,
the classical results apply once 
the condition \(k^\prime = k(k+1)/2 \le |\Omega|\) is met. In particular, we then have
\( \bigO(k^2/|\Omega|)\) for the rate of estimation  of the mean squared risk in the Frobenius norm.
In general, the target \(\Ts\) belongs to one of the  \(K = {p \choose k}\) different subspaces 
of \(\G_k(M)\) for \(k = 1,...,p\).
This model reminds us of a 
standard sparse generalised linear model with the dimension \(p^2\) and the sparsity level \(k^2\). However, the block-sparsity assumption imposes a very specific 
structure on the target matrix \(\Ts\). In particular, we have  \( {p \choose k}\) different subspaces instead of 
\({p^2 \choose k^2}\), which we deal with in sparse generalised linear models. 
Following the intuition of paying a price for not knowing the true subspace in sparse GLM, we expect to incur an unavoidable 
logarithmic factor in the minimax rate of convergence. In fact, Theorem~\ref{LowerBoundThm} shows that the minimax rate for estimating \(\Ts\) 
satisfies 
\[
\label{MinMaxRate}
\inf_{\Th} \sup_{\substack{\Ts \in \cup_{i \le k }\G_i(M) \\ \rank(\Ts) \le r}}
\frac{1}{|\Omega|} \E_{\P_{\Ts}} \big[\| \Th - \Ts\|^2_F\big] \gtrsim 
\frac{kr}{|\Omega|} + \frac{k}{|\Omega|}\log\big(\frac{p\ex}{k}\big)\,.
\]
for all \(k = 1,...,p\), which is substantially faster than the minimax rate \(k^2 \log(p^2e/k^2)/|\Omega|\) for sparse GLM. 
\end{comment}

\subsection{Explanatory variables}
\label{SEC:expl}
As mentioned above, this problem is different from tasks such as metric learning, where the objective is to estimate the $X_i$ with no side information. Here they are seen as covariates, allowing us to infer from the observation on the graph the predictor variable $\Theta_\star$. For this task to be even possible in a high-dimensional setting, we settle the identifiability issue by making the following variant of a classical assumption on $\bX \in \R^{d \times n}$.

\begin{definition}[Block isometry property]
	\label{AssWellCond}
	For a matrix $\bX \in \R^{d \times n}$ and an integer $s\in [d]$, we define $\Delta_{\Omega,s}(\bX) \in (0,1)$ as the smallest positive real such that
	\[
	N\big(1-\Delta_{\Omega,s}(\bX) \big) \|B\|_F^2 \le \|\bX^\top B\, \bX\|_{F,\Omega}^2 \le N\big(1+\Delta_{\Omega,s}(\bX)\big) \|B\|_{F}^2\, ,
	\]
	for all matrices $B \in \fS^d$ that satisfy the block-sparsity assumption $\|B\|_{0,0} \le s$.
\end{definition} 

%We note that in a high-dimensional regime, it is impossible for this property to hold over all matrices $B$, hence the restriction to matrices with block sparsity, related to our parameter space. This is a classical type of assumption in generalized linear models, with usually a restriction on sparsity of vectors such as sparsity {\NB:  style}. We recall this definition (and an extension).

\begin{definition}[Restriced isometry properties]
	For a matrix $A \in \R^{n \times p}$ and an integer $s\in [p]$, $\delta_{s}(A)\in (0,1)$ is the smallest positive real such that
	\[
	n\big(1-\delta_{s}(A) \big) \|v\|_2^2 \le \|Av\|_{2}^2 \le n \big(1+\delta_{s}(A) \big) \|v\|_2^2\, ,
	\]
	for all $s$-sparse vectors, i.e. satisfying $\|v\|_0\le s$.\\
	
	When $p=d^2$ is a square, we define $\delta_{\cB,s}(A)$ as the smallest positive real such that
	\[
	n\big(1-\delta_{\cB,s}(A) \big) \|v\|_2^2 \le \|Av\|_{2}^2 \le n\big(1+\delta_{\cB, s}(A) \big) \|v\|_2^2\, ,
	\]
	for all vectors such that $v=\vect(B)$, where $B$ satisfies the block-sparsity assumption $\|B\|_{0,0} \le s$.
\end{definition} 

The first definition is due to \cite{CanTao04}, with restriction to sparse vectors. It can be extended in general, as here, to other types of restrictions \citep[see, e.g.][]{TraGri15}. Since the restriction on the vectors in the second definition ($s$-by-$s$ block-sparsity) is more restricting than in the first one (sparsity),  $\delta_{\cB,s}$ is smaller than $\delta_{s^2}$. These different measures of restricted isometry are related, as shown in the following lemma
\begin{lemma}
	\label{lemma:RIP}
	For a matrix $\bX \in \R^{d \times n}$, let $\bD_\Omega \in \R^{N\times d^2}$ be defined row-wise by $\bD_{\Omega \,(i,j)} = \vect(X_j X_i^\top)$ for all $(i,j) \in \Omega$. It holds that 
	\[
	\Delta_{\Omega,s}(\bX) = \delta_{\cB,s}(\bD_\Omega)\, .
	\]
\end{lemma}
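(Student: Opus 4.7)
The plan is to verify that, after applying the vectorization identity, the two inequalities defining $\Delta_{\Omega,s}(\bX)$ and $\delta_{\cB,s}(\bD_\Omega)$ are literally the same, and that they are quantified over the same set of matrices. Since the two quantities are then defined as the infimum of the same set of admissible constants, they must coincide.

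First, I would fix any $B \in \fS^d$ and set $v = \vect(B)$, so that $\|v\|_2^2 = \|B\|_F^2$. Using the identity from \eqref{eq:vectorNot}, for every $(i,j)\in \Omega$,
\[
(\bD_\Omega v)_{(i,j)} = \langle \vect(X_j X_i^\top), \vect(B)\rangle = X_i^\top B X_j = (\bX^\top B \bX)_{ij}\,.
\]
Squaring and summing over $(i,j)\in \Omega$ gives
\[
\|\bD_\Omega v\|_2^2 \;=\; \sum_{(i,j)\in \Omega}(X_i^\top B X_j)^2 \;=\; \|\bX^\top B\, \bX\|_{F,\Omega}^2\,.
\]
Thus the middle term in the block isometry property and the middle term in the restricted isometry property for $\bD_\Omega$ are equal, as are the outer norms $\|B\|_F^2 = \|v\|_2^2$. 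With $n = N$ (the number of rows of $\bD_\Omega$), the two double inequalities are identical.

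Next, I would check that the two constraint sets match. The block isometry property quantifies over $B \in \fS^d$ with $\|B\|_{0,0}\le s$, while $\delta_{\cB,s}(\bD_\Omega)$ quantifies over vectors of the form $v=\vect(B)$ with exactly the same block-sparsity bound $\|B\|_{0,0}\le s$. Since $B \mapsto \vect(B)$ is a bijection between these two families, any constant $c$ that validates the double inequality for all admissible $B$ also validates it for all admissible $v$, and vice versa.

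Finally, taking the infimum on both sides over admissible constants in $(0,1)$ yields $\Delta_{\Omega,s}(\bX) = \delta_{\cB,s}(\bD_\Omega)$. There is no real obstacle here; the only care needed is to confirm that the block-sparsity pattern is preserved under vectorization (which it is, since nonzero rows/columns of $B$ correspond to prescribed blocks of coordinates of $\vect(B)$) and to match the normalisations $N$ and $n$.
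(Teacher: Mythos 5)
Your proof is correct and follows the same route as the paper: both rest on the identity $\|\bX^\top B\,\bX\|_{F,\Omega}^2 = \|\bD_\Omega \vect(B)\|_2^2$ together with $\|\vect(B)\|_2^2 = \|B\|_F^2$, the matching of the block-sparsity constraint under vectorization, and the observation that the normalisation constant is $N$ in both definitions. Your write-up simply spells out the row-wise computation and the infimum argument that the paper leaves implicit.
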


\begin{proof}
	This is a direct consequence of the definition of $\bD_\Omega$, which yields $\|\bX^\top B\, \bX\|_{F,\Omega}^2 = \|\bD_{\Omega} \, \vect(B)\|_2^2$, and  $\|\vect(B)\|_2^2 = \|B\|_F^2$.
	
\end{proof}

The assumptions above guarantee that the matrix $\Ts$ can be recovered from observations of the affinities, settling the well-posedness of this part of the inverse problem. However, we do not directly observe these affinities, but their image through the sigmoid function. We must therefore further 
impose the following assumption on the design matrix \(\bX\) that yields constraints on the probabilities \(\pi_{ij}\) and in essence governs the identifiability of \(\Theta_{\star}\).
\begin{assumption}
	\label{AssParamBound}
	There exists a constant \(M > 0\) such that for all \(\Theta \) in the class \(\cP(M)\) we have \(\max_{(i,j) \in \Omega} |X_i^\top \Theta X_j| < M \).
\end{assumption}
In particular, under this assumption a constant 
\begin{equation}
\label{ConstLm}
\Lm(M) := \sigma^\prime (M) = \sigma(M)\big(1 - \sigma(M)\big)\,,
\end{equation}
is lower bounded away from zero, and
we have 
\begin{equation}
\inf_{\Theta \in \cP(M)}  \sigma^\prime (X_i^\top \Theta X_j) \ge \Lm(M) > 0\,,
\label{infThGkM}
\end{equation}
for all \((i,j)\in \Omega \). Assuming that \(\Lm(M)\) always depends on the same \(M\), we  sometimes  write simply \(\Lm\).
\begin{remark}
	
	Assumption~\ref{AssParamBound} is necessary for the identifiability of \(\Theta_{\star}\): if $X_i^\top \Theta_\star X_j$ can be arbitrarily large in magnitude, $\pi_{ij} = \sigma(X_i^\top \Theta_\star X_j)$ can be arbitrarily close to $0$ or $1$. Since our observations only depend on $\Theta_\star$ through its image $\pi_{ij}$, this could lead to a very large estimation error on $\Theta_\star$ even with a small estimation error on the $\pi_{ij}$.

	%	Since we are concerned with the task of estimation rather than prediction, such an assumption is therefore necessary. 
\end{remark}
\begin{remark}
	This assumption has already appeared in the literature on high-dimensional estimation, see \cite{van2008high, abramovich2014}.
	Similarly to \cite{bach2010self}, Assumption~\ref{AssParamBound} can be shown to be redundant  
	for minimax optimal prediction, 
	because the log-likelihood function in the matrix logistic regression model satisfies the so-called self-concordant property. 
	Our analysis to follow can be combined with an analysis similar to \cite{bach2010self} to get rid of the assumption for minimax optimal prediction.
	%	Our aim to keep the quantity \(\max_{(i,j)\in \Omega} |X_i^\top \Theta X_j|\) under control for all \(\Theta\) in the class
	%	can be achieved by assuming that 
	%	\(\| X_i\|_1 \le \sqrt{D/M}\) holds for all \(i= 1,...,n\). %Clearly, other norms can be considered as well.
	%For example, one can control the \(\ell_2\)-norm on the design vectors and the Frobenius norm on the matrices \(\Theta\), i.e. 
	%redefine the class \(\G(M)\) such that \(\|\Theta \|_F \le M < \infty\) for all \(\Theta\) in \(\G(M)\).
	% We have chosen to control 
	%the \(\ell_\infty\)-norm of the design vectors as the easiest to verify in practice.
\end{remark}

\begin{proposition}
	The identifiability assumption \(\max_{(i,j) \in \Omega} |X_i^\top \Theta X_j| < M \) 
	is guaranteed for  all \(\Theta \in \cP(M)\) and design matrices \(\bX\) satisfying either of the following
	\begin{itemize}
		\item  $  \|X_j X_i^\top\|_{\infty } \le 1$,
		\item $\|\Theta \|_{F}^2 < M_1$ for some \(M_1 > 0\) and the block isometry property. 
	\end{itemize}
	%	However, the proofs
	%	become more technical relatively to the potential gain and we hence prefer to stick to Assumption~\ref{AssParamBound} 
	%	in our results.
	%Bounding the design matrix overcomes the identifiability, as $|X_i^\top \Theta_\star X_j|\le \|\Theta_\star \|_{1,1} \cdot \|X_j X_i^\top\|_{\infty }$, by duality. 
	%A rough bound on \(\max_{(i,j) \in \Omega} |X_i^\top \Theta X_j| \) is also guaranteed by the 
	%block isometry property in Definition~\ref{AssWellCond}.
\end{proposition}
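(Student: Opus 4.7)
My plan is to treat the two bullets separately, since they invoke distinct mechanisms: the first is a pure norm-duality argument, while the second relies on the quadratic block-isometry inequality applied to $\Theta$ itself.

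For the first bullet, I rewrite the affinity via the same identity as in \eqref{eq:vectorNot}:
\[
X_i^\top \Theta X_j = \tr(X_j X_i^\top \Theta) = \sum_{k,\ell}(X_j X_i^\top)_{k\ell}\,\Theta_{k\ell}\, ,
\]
and then apply the elementary Hölder inequality $\bigl|\sum_{k,\ell}A_{k\ell}B_{k\ell}\bigr| \le \|A\|_\infty \|B\|_{1,1}$, which is just the duality between the entrywise $\ell_\infty$ and $\ell_1$ norms. Combined with the hypothesis $\|X_j X_i^\top\|_\infty \le 1$ and with $\|\Theta\|_{1,1} < M$ (built into the definition of $\cP(M)$), this yields the strict bound $|X_i^\top \Theta X_j| < M$ uniformly over $(i,j) \in \Omega$. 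The argument is essentially a one-liner.

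For the second bullet, I first pass from a pointwise entry to the full restricted Frobenius semi-norm:
\[
\max_{(i,j) \in \Omega}(X_i^\top \Theta X_j)^2 \le \sum_{(i,j) \in \Omega}(X_i^\top \Theta X_j)^2 = \|\bX^\top \Theta \bX\|_{F,\Omega}^2\, ,
\]
and then invoke the block isometry property of Definition~\ref{AssWellCond} with a sparsity parameter $s$ compatible with $\Theta$ (taking $s=d$ imposes no restriction on $\Theta$) to obtain
\[
\|\bX^\top \Theta \bX\|_{F,\Omega}^2 \le N\bigl(1+\Delta_{\Omega,s}(\bX)\bigr)\|\Theta\|_F^2 < N\bigl(1+\Delta_{\Omega,s}(\bX)\bigr) M_1\, .
\]
Taking square roots and choosing $M_1$ calibrated against $M$, $N$, and the block-isometry constant, for instance $M_1 \le M^2/[N(1+\Delta_{\Omega,s}(\bX))]$, delivers the desired $\max_{(i,j)\in\Omega}|X_i^\top\Theta X_j| < M$.

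Neither step presents a real technical obstacle; both follow directly from definitions already introduced in the excerpt and from classical norm inequalities. The only mildly delicate point is reading "for some $M_1 > 0$" in the second bullet as existence of a sufficiently small constant depending on $M$, $N$ and $\Delta_{\Omega,s}(\bX)$, which is the natural interpretation once the block-isometry bound has been applied.
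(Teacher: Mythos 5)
Your proposal is correct, and since the paper states this proposition without providing any proof, there is nothing to contrast it with: your argument is the natural (and almost certainly intended) one. The first bullet is exactly the entrywise H\"older duality $|X_i^\top\Theta X_j| = |\langle X_iX_j^\top,\Theta\rangle| \le \|X_jX_i^\top\|_{\infty}\|\Theta\|_{1,1} < M$, using that $\|\Theta\|_{1,1}<M$ is built into $\cP(M)$; the second bullet is the max-by-sum bound followed by the upper block-isometry inequality, where, as you note, one must take the block-isometry level $s=d$ (since $\cP(M)=\cP_{d,d}(M)$ carries no sparsity) and read ``for some $M_1>0$'' as allowing $M_1$ to depend on $M$, $N$ and $\Delta_{\Omega,d}(\bX)$, e.g.\ $M_1 \le M^2/[N(1+\Delta_{\Omega,d}(\bX))]$ --- an interpretation that is essentially forced, since designs of the type $X_i=N^{1/4}e_i$ used in the paper's reduction scheme show the $\sqrt{N}\,\|\Theta\|_F$ worst-case scaling of a single affinity cannot be improved under these hypotheses alone.
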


\subsubsection{Random designs}
\label{SecRandDesign}
For random designs, we require the block isometry property to hold with high probability. 
Then the results in this article carry over directly
and thus we do not discuss it in full detail. 
It is well known that for sparse linear models with the dimension of a target 
vector \(\bar{p}\) and the sparsity \(\bar{k}\), 
the classical restricted isometry property holds for some classes of random matrices with i.i.d. entries including 
sub-Gaussian and Bernoulli matrices, see \cite{mendelson2008uniform}, provided that 
\(\bar{n} \gtrsim \bar{k} \log(\bar{p}/\bar{k})\), and i.i.d. subexponential random matrices, see \cite{adamczak2011restricted}, provided that 
\(\bar{n} \gtrsim \bar{k} \log^2(\bar{p}/\bar{k})\).
In the same spirit, the design matrices with independent entries following sub-Gaussian, subexponential or Bernoulli 
distributions can be shown to satisfy the block isometry property, cf.  \cite{WangBer2016}, provided that the number of observed edges in the network  satisfies \(N \gtrsim k^2 \log^2(d/k)\) for sub-Gaussian and subexponential designs and 
\(N \gtrsim k^2 \log(d/k)\) for Bernoulli designs.

%. To see this, note that the 
%product of two independent sub-Gaussian or subexponential  random variables is a subexponential random variable 
%and the product of two independent Bernoulli random variables is a sub-Gaussian random variable, and then exploit for example Lemma~\ref{lemma:RIP}, 

\section{Matrix Logistic Regression}

\label{SEC:algo}

The log-likelihood for this problem is
\[
\ell_Y(\Theta) = -\sum_{(i,j) \in \Omega} \xi(s_{(i,j)} X_i^\top \Theta X_j)\, ,
\]
where $s_{(i,j)} = 2 Y_{(i,j)} -1$ is a sign variable that depends on the observations $Y$ and $\xi: x \mapsto \log(1+e^x)$ is a {\em softmax} function, convex on $\R$. As a consequence, the negative log-likelihood $ -\ell_Y$ is a convex function of $\Theta$. Denoting by $\ell$ the expectation $\E_{\Theta_\star}[\ell_Y]$, we recall the classical expressions for all $\Theta \in \fS^d$
\begin{align*}
\ell(\Theta) &= \ell(\Theta_\star) -  \sum_{(i,j) \in \Omega} {\sf KL}(\pi_{ij}(\Theta_\star),\pi_{ij}(\Theta))\\
&= \ell(\Theta_\star) - {\sf KL}(\fP_{\Theta_\star},\fP_{\Theta})\, ,
\end{align*}
where we recall $\pi_{ij}(\Theta) = \sigma(X_i^\top \Theta X_j)$, and
\[
\ell_Y(\Theta) = \ell(\Theta) + \llangle \nabla \zeta , \Theta \rrangle_F \,,
\]
where $\zeta$ is a stochastic component of the log-likelihood  with constant gradient 
$\nabla \zeta \in \R^{d\times d}$ given by  $\nabla \zeta= \sum_{(i,j)\in \Omega} (Y_{(i,j)} - \pi_{ij}(\Theta_\star)) X_j X_i ^\top$, which is a sum of independent centered random variables.

\subsection{Penalized logistic loss}
\label{SEC:penLogLoss}

In a classical setting where $d$ is fixed and $N$ grows, the maximiser of $\ell_Y$ - the maximum likelihood estimator   - is an accurate estimator of $\Theta_\star$, provided that it is possible to identify $\Theta$ from $\fP_{\Theta}$ (i.e. if the $X_i$ are well conditioned). We are here in a high-dimensional setting where $d^2 \gg N$, and this approach is not directly possible. Our parameter space indicates that the intrinsic dimension of our problem is truly much lower in terms of rank and block-sparsity. Our assumption on the conditioning of the $X_i$ is tailored to this structural assumption. In the same spirit, we also modify our estimator in order to promote the selection of elements of low rank and block-sparsity.
Following the ideas of 
\cite{birge2007minimal} and \cite{abramovich2014}, 
we define the following penalized maximum likelihood estimator 
\begin{equation}
\label{eq:penMLE}
\hat \Theta \in \argmin_{\Theta \in\cP(M)} \Big\{ - 	\ell_Y(\Theta)  + p(\Theta) \Big\}\, ,
\end{equation}

%	\[
%- 	\ell_Y(\Theta)  + p(\Theta) = \sum_{(i,j) \in \Omega} \xi(s_{(i,j)} X_i^\top \Theta X_j) +  p(\Theta)  \, ,
%	\]
with a penalty $p$ defined as
\begin{equation}
\label{eq:pen}
p(\Theta)= g(\text{rank}(\Theta),\|\Theta\|_{0,0}) \, ,\quad \text{and} \quad g(R,K) =  c K R  + c K\log\big(\frac{d\ex}{K}\big) \, ,
\end{equation}
where \(c > 0\) is a universal constant and to be specified further. 
%Note that this function is independent of the true rank and block-sparsity $(r,k)$ {\color{orange}\NB notation for the true r,k?}. We define an adaptive estimator $\hat \Theta$ as the minimizer of this function
%	\begin{equation}
%	\label{eq:penMLE}
%	\hat \Theta \in \argmin_{\Theta \in\cP(M)} \Big\{ - 	\ell_Y(\Theta)  + p(\Theta) \Big\}\, .
%	\end{equation}
The proof of the following theorem is based on Dudley's integral argument  combined with Bousquet's inequality and is deferred to the Appendix. 
\begin{theorem}
	\label{ThmUpperBound}
	Assume  the design matrix \(\bX\) satisfies $\max_{(i,j) \in \Omega} |X_i^\top \Theta_\star X_j| < M$ for some \(M > 0\) and all \(\Ts\) in a given class, and the penalty term \(p(\Theta)\) satisfies \eqref{eq:pen} with the constants \(c \ge c_{1}/\Lm\), \(c_1 > 1\), \(\Lm\) given in \eqref{ConstLm}. 
	Then for the penalised MLE estimator \(\Th\), the following non-asymptotic upper bound on  the 
	expectation of the Kullback-Leibler divergence between the measures \(\P_{\Ts}\) and \(\P_{\Th}\) holds
	\begin{equation}
	\label{rateUpperBound0}
	\sup_{\Ts \in \cP_{k,r}(M)  } \frac{1}{N}
	\E[\Kc(\P_{\Ts}, \P_{\Th})] \le
	%\frac{A}{n^2} +
	C_1\frac{kr}{N} +  C_1\frac{k}{N} \log\big(\frac{d\ex}{k}\big)  \,,
	\end{equation}
	where \(C_1 > 3c\) is some universal constant for all \(k = 1,...,d\) and \(r = 1,...,k\). %, where the constant \(A = 4(2c-1)/\Lm(c-1) > 0\) is independent of \(k\) and \(p\). 
\end{theorem}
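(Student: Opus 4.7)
The plan is to combine the optimality of $\Th$ with a sub-Gaussian empirical-process bound on the stochastic part of the log-likelihood, then absorb the latter into the penalty. By definition of $\Th$ in \eqref{eq:penMLE}, $-\ell_Y(\Th)+p(\Th)\le -\ell_Y(\Ts)+p(\Ts)$. Inserting the decomposition $\ell_Y(\Theta)=\ell(\Theta)+\llangle\nabla\zeta,\Theta\rrangle_F$ and the identity $\ell(\Ts)-\ell(\Theta)=\Kc(\P_{\Ts},\P_{\Theta})$ furnished in the text, this rearranges to the basic inequality
\[
\Kc(\P_{\Ts},\P_{\Th})\ \le\ \llangle\nabla\zeta,\Th-\Ts\rrangle_F\ +\ p(\Ts)-p(\Th).
\]
The residual $\Delta=\Th-\Ts$ has block-sparsity at most $K'=\hat K+k$ and rank at most $R'=\hat R+r$, where $\hat K,\hat R$ denote the block-sparsity and rank of $\Th$.

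I would next control the inner product $\llangle\nabla\zeta,\Delta\rrangle_F$. Because $\nabla\zeta$ is a sum of independent, bounded, centered Bernoulli summands, the linear functional $\Delta\mapsto\llangle\nabla\zeta,\Delta\rrangle_F$ is sub-Gaussian with variance proxy $\tfrac{1}{4}\|\bX^\top\Delta\bX\|_{F,\Omega}^2\lesssim N\|\Delta\|_F^2$ by the block isometry property of Definition~\ref{AssWellCond}. Combining Cauchy--Schwarz with Dudley's entropy integral applied to the unit-Frobenius slice of the class $\cC_{K',R'}=\{\Gamma\in\fS^d:\|\Gamma\|_{0,0}\le K',\ \rk(\Gamma)\le R'\}$ yields
\[
\E\sup_{\Delta\in\cC_{K',R'},\,\|\Delta\|_F\le 1}\llangle\nabla\zeta,\Delta\rrangle_F\ \lesssim\ \sqrt{N}\,\sqrt{K'R'+K'\log(d\ex/K')}.
\]
The two terms arise separately in the covering estimate: choosing the $K'$ active indices contributes $\binom{d}{K'}\le(d\ex/K')^{K'}$ possible supports (entropy $K'\log(d\ex/K')$), while an $\epsilon$-net on the rank-$R'$ manifold of $K'\times K'$ unit-norm matrices contributes metric entropy of order $K'R'\log(1/\epsilon)$. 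Bousquet's inequality for suprema of bounded empirical processes upgrades this expectation to a high-probability deviation bound, and a union bound over the at most $d^2$ pairs $(K',R')$ renders it uniform in $(\hat K,\hat R)$, at a logarithmic cost that is subsumed into $K'\log(d\ex/K')$.

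For the absorption step, Assumption~\ref{AssParamBound} forces the Bernoulli Kullback--Leibler divergence to enjoy a quadratic lower bound of the form $\Kc(\P_{\Ts},\P_{\Theta})\ge\tfrac{\Lm}{2}\|\bX^\top(\Theta-\Ts)\bX\|_{F,\Omega}^2$, so together with the block isometry property one has $\|\Th-\Ts\|_F^2\lesssim \tfrac{1}{\Lm N}\Kc(\P_{\Ts},\P_{\Th})$. Combining this with the Dudley bound and Young's inequality $ab\le\tfrac{1}{4}a^2+b^2$ yields
\[
\llangle\nabla\zeta,\Th-\Ts\rrangle_F\ \le\ \tfrac{1}{2}\Kc(\P_{\Ts},\P_{\Th})\ +\ \frac{C}{\Lm}\bigl(K'R'+K'\log(d\ex/K')\bigr).
\]
Substituting into the basic inequality and using the hypothesis $c\ge c_1/\Lm$ with $c_1$ large enough, the residual stochastic term indexed by $(\hat K,\hat R)$ is dominated by $p(\Th)$, leaving $\Kc(\P_{\Ts},\P_{\Th})\lesssim p(\Ts)=kr+k\log(d\ex/k)$ up to universal constants. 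Taking expectations and dividing by $N$ yields \eqref{rateUpperBound0}. The main obstacle is this absorption: one needs $p(\Th)$ to dominate $g(\hat K+k,\hat R+r)/\Lm$, which is delicate because the product term $KR$ in $g$ is not jointly subadditive. This is resolved by the peeling argument, which handles each stratum $(\hat K,\hat R)$ separately, exploits the genuine subadditivity of $K\mapsto K\log(d\ex/K)$, and uses a case split on whether $\hat K\gtrless k$ and $\hat R\gtrless r$ to bound $(\hat K+k)(\hat R+r)$ by a constant multiple of $\hat K\hat R + kr$ on each branch before invoking the choice of $c$.
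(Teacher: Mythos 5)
Your overall architecture (basic inequality from the definition of $\Th$, sub-Gaussian control of $\llangle\nabla\zeta,\Delta\rrangle_F$ via Dudley plus Bousquet, absorption into the penalty) is the same as the paper's, but the step you yourself flag as the "main obstacle" is resolved incorrectly. You bound the entropy of the residual class through the inflated parameters $K'=\hat K+k$, $R'=\hat R+r$, and then claim a case split gives $(\hat K+k)(\hat R+r)\le C(\hat K\hat R+kr)$. That inequality is false: take $\hat K=d$, $\hat R=1$, $k=r=\lceil\sqrt d\,\rceil$; the cross term $\hat K r\asymp d^{3/2}$ while $\hat K\hat R+kr\asymp d$, and the logarithmic parts of $p(\Th)+p(\Ts)$ are also only $O(d)$, so the stochastic term at stratum $(K',R')$ cannot be dominated by $\tfrac12\Kc(\P_{\Ts},\P_{\Th})+p(\Th)+Cp(\Ts)$ along your route. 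The point you are missing, and the way the paper's proof sidesteps the issue, is that $\Ts$ is fixed: the set $\{\bX^\top(\Theta-\Ts)\bX:\ \|\Theta\|_{0,0}= K,\ \rk(\Theta)= R,\ \|\bX^\top(\Theta-\Ts)\bX\|_{F,\Omega}\le Z\}$ is a translate of the corresponding set of $\bX^\top\Theta\bX$, and covering numbers are translation invariant, so the relevant entropy is of order $KR\log(9Z/\eps)+K\log(d\ex/K)$ at the candidate's own complexity $(K,R)$, never at $(K+k,R+r)$. With that entropy the condition $p(K,R)\gtrsim (KR+K\log(d\ex/K))/\Lm$ absorbs the stochastic term stratum by stratum (the paper's peeling over $(K,R)$ and the dyadic shells $2^sR_t$), and your obstacle disappears; as written, however, the case-split argument is a genuine gap.

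Two further deviations. First, you invoke the block isometry property of Definition~\ref{AssWellCond} both for the variance proxy and to turn $\|\Th-\Ts\|_F^2$ into a KL bound, but Theorem~\ref{ThmUpperBound} does not assume it (only the subsequent corollary on the Frobenius risk does). It is also unnecessary: the variance proxy is already $\tfrac14\|\bX^\top\Delta\bX\|_{F,\Omega}^2$ and the curvature bound \eqref{TayMean} gives $\Kc(\P_{\Ts},\P_{\Th})\ge\tfrac{\Lm}{2}\|\bX^\top(\Th-\Ts)\bX\|_{F,\Omega}^2$, so the entire argument should be run in the semi-norm $\|\bX^\top(\cdot)\,\bX\|_{F,\Omega}$, as the paper does; detouring through the Frobenius norm proves a statement with an extra hypothesis. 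Second, the passage from a high-probability bound to the expected KL is glossed: one needs the deviation inequality at every level (the paper shows $\P_{\Ts}(\ell(\Ts)-\ell(\Th)+p(\Th)>2p(\Ts)+R_t^2)\le \ex^{-cR_t}$, sets $R_t^2=p(\Ts)$ and integrates the tail), and the radius of the Dudley ball on each stratum must itself be tied to the random quantity $\|\bX^\top(\Th-\Ts)\bX\|_{F,\Omega}$, which is exactly what the slicing in $s$ accomplishes and what your Young-inequality shortcut only gestures at. These two points are repairable, but together with the failed absorption step the proof as proposed does not go through.
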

\begin{remark}%As it follows from the proof, the same rate \(\bigO(k^2/n^2 + k/n^2 \log(d\ex/k))\) is established for the probabilistic inequality. 
	Random designs with i.i.d. entries following sub-Gaussian, Bernoulli and subexponential distributions 
	discussed in Section~\ref{SecRandDesign} yield the same rate as well. It can formally be shown using standard conditioning arguments \citep[see, e.g.][]{nickl2013}.
\end{remark}

%\begin{remark}
%\end{remark}

\begin{comment}
{\color{orange}\NB Please delete or modify the following corollary }
\end{comment}
\begin{corollary}
	Assume  the design matrix \(\bX\) satisfies  the block isometry property from Definition~\ref{AssWellCond} and $\max_{(i,j) \in \Omega} |X_i^\top \Theta_\star X_j| < M$ for some \(M > 0\) and all \(\Ts\) in a given class, and the penalty term \(p(\Theta)\) is as in Theorem~\ref{ThmUpperBound}. 
	Then for the penalised MLE estimator \(\Th\), the following non-asymptotic upper bound on the rate of estimation holds
	\begin{equation*}
	%	\label{rateUpperBound}
	\sup_{\Ts \in \cP_{k,r}(M)  } \E \big[\|\Th - \Ts \|_F^2\big] \le 
	%\frac{A}{n^2} +
	\frac{C_1}{\Lm(M) \big(1 - \Delta_{\Omega,2k}(\bX) \big)} \Big(	\frac{kr}{N} +  \frac{k}{N} \log\big(\frac{d\ex}{k}\big) \Big) \,,
	\end{equation*}
	where \(C_1 > 3c\) is some universal constant	for all \(k = 1,...,d\) and \(r = 1,...,k\). %, where the constant \(A = 4(2c-1)/\Lm(c-1) > 0\) is independent of \(k\) and \(p\). 
\end{corollary}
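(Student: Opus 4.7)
The plan is to derive the Frobenius-norm bound from the KL bound of Theorem~\ref{ThmUpperBound} by establishing a quadratic lower bound on the KL divergence in terms of the differences of affinities, and then invoking the block isometry property to pass from the empirical affinity semi-norm $\|\bX^\top(\Th-\Ts)\bX\|_{F,\Omega}^2$ to $\|\Th-\Ts\|_F^2$. The overall chain of inequalities we aim to prove is
\[
\frac{\Lm(M)\bigl(1-\Delta_{\Omega,2k}(\bX)\bigr)}{2}\,N\,\|\Th-\Ts\|_F^2 \;\le\; \Kc(\P_{\Ts},\P_{\Th}),
\]
and then take expectations and invoke \eqref{rateUpperBound0}.

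First, I would decompose $\Kc(\P_{\Ts},\P_{\Th}) = \sum_{(i,j)\in\Omega}\Kc\bigl(\pi_{ij}(\Ts),\pi_{ij}(\Th)\bigr)$ and examine the KL divergence between two Bernoullis with parameters $\sigma(a)$ and $\sigma(b)$, where $a = X_i^\top\Ts X_j$ and $b = X_i^\top\Th X_j$. Writing $\xi(x)=\log(1+e^x)$ so that $\xi'=\sigma$ and $\xi''=\sigma'$, a direct computation gives
\[
\Kc(\sigma(a),\sigma(b)) \;=\; \xi(b)-\xi(a)-\sigma(a)(b-a) \;=\; \tfrac12\,\sigma'(c)\,(a-b)^2
\]
for some $c$ between $a$ and $b$, by Taylor's theorem. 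Since both $\Ts$ and $\Th$ lie in $\cP(M)$, Assumption~\ref{AssParamBound} gives $|a|,|b|<M$, hence $|c|<M$; because $\sigma'$ is even and decreasing on $\R_+$, we have $\sigma'(c)\ge\sigma'(M)=\Lm(M)$, yielding
\[
\Kc(\pi_{ij}(\Ts),\pi_{ij}(\Th)) \;\ge\; \tfrac{\Lm(M)}{2}\bigl(X_i^\top(\Th-\Ts)X_j\bigr)^2.
\]
Summing over $(i,j)\in\Omega$ gives $\Kc(\P_{\Ts},\P_{\Th}) \ge \tfrac{\Lm(M)}{2}\,\|\bX^\top(\Th-\Ts)\bX\|_{F,\Omega}^2$.

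Next, I would invoke the block isometry property of Definition~\ref{AssWellCond}. The key observation is that both $\Ts$ and $\Th$ are block-$k$-sparse (since $\Th\in\cP(M)$ is the argmin over $\cP(M)$, but more precisely we need $\Th$ to satisfy $\|\Th\|_{0,0}\le k$ too, which follows from Theorem~\ref{ThmUpperBound}'s setup or equivalently by restricting the argmin to the relevant subclass as is standard in penalised MLE arguments). Thus the support rows/columns of $\Th-\Ts$ are contained in a union of supports of size at most $2k$, so $\|\Th-\Ts\|_{0,0}\le 2k$. Applying Definition~\ref{AssWellCond} with $s=2k$,
\[
\|\bX^\top(\Th-\Ts)\bX\|_{F,\Omega}^2 \;\ge\; N\bigl(1-\Delta_{\Omega,2k}(\bX)\bigr)\,\|\Th-\Ts\|_F^2.
\]
Combining the two inequalities and taking expectation, Theorem~\ref{ThmUpperBound} gives
\[
\E[\|\Th-\Ts\|_F^2] \;\le\; \frac{2}{\Lm(M)\bigl(1-\Delta_{\Omega,2k}(\bX)\bigr)} \cdot \frac{1}{N}\E[\Kc(\P_{\Ts},\P_{\Th})] \;\le\; \frac{2C_1}{\Lm(M)\bigl(1-\Delta_{\Omega,2k}(\bX)\bigr)}\Bigl(\tfrac{kr}{N}+\tfrac{k}{N}\log\tfrac{d\ex}{k}\Bigr),
\]
after which the factor of $2$ is absorbed into the universal constant.

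The main subtlety, and the step I would be most careful about, is the block-sparsity of $\Th-\Ts$: one must be sure that the penalised MLE $\Th$ actually satisfies a block-sparsity constraint compatible with invoking the block isometry at level $2k$. If $\Th$ is only known to minimise over the larger class $\cP(M)$ without an explicit sparsity constraint, one may need either to use that the minimiser automatically inherits sparsity from the penalty (which is why the penalty in \eqref{eq:pen} includes the $\|\Theta\|_{0,0}$ term), or to argue via an oracle inequality on sparsity. A second minor issue is that Assumption~\ref{AssParamBound} must cover both $\Ts$ and $\Th$, so that the Taylor remainder $\sigma'(c)$ admits the uniform lower bound $\Lm(M)$; this is automatic since both lie in $\cP(M)$. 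Everything else is a straightforward chaining together of the KL lower bound, the block isometry, and the bound of Theorem~\ref{ThmUpperBound}.
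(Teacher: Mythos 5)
Your proof is correct and follows essentially the route the paper intends: the per-edge Taylor/strong-convexity bound you derive is exactly the paper's inequality \eqref{TayMean}, which together with the block isometry property of Definition~\ref{AssWellCond} converts the Kullback--Leibler bound of Theorem~\ref{ThmUpperBound} into the Frobenius-norm bound. The sparsity subtlety you flag (that $\hat\Theta$ minimises over $\cP(M)$, so its block-sparsity is only controlled through the $\|\cdot\|_{0,0}$ term of the penalty via the deviation inequality, rather than being exactly $k$) is a genuine point that the paper itself glosses over in stating the constant $\Delta_{\Omega,2k}(\bX)$, so your handling of it is as careful as the original.
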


Let us define rank-constrained maximum likelihood estimators with bounded  block size as 
\begin{equation*}
\hat \Theta_{k,r} \in \argmin_{\Theta \in\cP_{k,r}(M)} \{  - \ell_Y(\Theta) \} \,.
\end{equation*}
It is intuitively clear that without imposing any regularisation on the likelihood function, the maximum likelihood 
approach selects the most complex model. In fact, the following result holds.
\begin{theorem}
	\label{ThmConstrainedMLE}
	
	Assume the  design matrix \(\bX\) satisfies the block isometry property from Definition~\ref{AssWellCond} and $\max_{(i,j) \in \Omega} |X_i^\top \Ts X_j| < M$ for some \(M > 0 \) and all \(\Ts\) in a given class.
	Then for the maximum likelihood estimator
	\( \hat \Theta_{k,r} \), 
	the following non-asymptotic upper bound on the 
	rate of estimation  holds
	
	\begin{equation*}
	\label{rateUpperBound2}
	\sup_{\Ts \in \cP_{k,r}(M)  }  \E \big[\|\Th_{k,r} - \Ts \|_F^2\big] \le
	%\frac{A}{n^2} +
	\frac{C_3}{\Lm(M) \big(1 - \Delta_{\Omega,2k}(\bX) \big)} \Big(	\frac{kr}{N} +  \frac{k}{N} \log\big(\frac{d\ex}{k}\big) \Big) \,,
	\end{equation*}
	for all \(k = 1,...,d\) and \(r = 1,...,k\) and some  constant \(C_3 > 0\). 
\end{theorem}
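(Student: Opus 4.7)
\noindent\textbf{Proof Plan for Theorem~\ref{ThmConstrainedMLE}.}
The strategy is to adapt the argument of Theorem~\ref{ThmUpperBound} to the case where no penalty is needed, since the complexity parameters $k$ and $r$ are fixed by the constraint set $\cP_{k,r}(M)$. The starting point is the basic inequality: by optimality of $\hat\Theta_{k,r}$ over $\cP_{k,r}(M)$ (which contains $\Ts$), we have $\ell_Y(\hat\Theta_{k,r}) \ge \ell_Y(\Ts)$. Using the decomposition $\ell_Y(\Theta) = \ell(\Theta) + \llangle \nabla\zeta,\Theta\rrangle_F$ and $\ell(\Ts)-\ell(\Theta) = \Kc(\P_{\Ts},\P_\Theta)$ recalled in Section~\ref{SEC:algo}, this yields the key bound
\begin{equation*}
\Kc(\P_{\Ts},\P_{\hat\Theta_{k,r}}) \le \llangle \nabla\zeta, \hat\Theta_{k,r} - \Ts\rrangle_F.
\end{equation*}

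\noindent The next step is to lower bound the KL divergence by a Frobenius-norm quantity. Since both $\hat\Theta_{k,r}$ and $\Ts$ lie in $\cP(M)$, Assumption~\ref{AssParamBound} ensures $\sigma'(X_i^\top\Theta X_j)\ge \Lm(M)$ along the segment between them, so a second-order Taylor expansion of the Bernoulli log-likelihood gives $\Kc(\P_{\Ts},\P_{\hat\Theta_{k,r}}) \ge \tfrac{\Lm}{2}\|\bX^\top(\hat\Theta_{k,r}-\Ts)\bX\|_{F,\Omega}^2$. The difference $\hat\Theta_{k,r} - \Ts$ has block-sparsity at most $2k$ and rank at most $2r$, so by the block isometry property of Definition~\ref{AssWellCond},
\begin{equation*}
\Kc(\P_{\Ts},\P_{\hat\Theta_{k,r}}) \ge \tfrac{\Lm}{2}\,N\bigl(1-\Delta_{\Omega,2k}(\bX)\bigr)\,\|\hat\Theta_{k,r}-\Ts\|_F^2.
\end{equation*}

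\noindent The heart of the proof is controlling the stochastic term $\llangle\nabla\zeta,\hat\Theta_{k,r}-\Ts\rrangle_F$. I would set up an empirical process over the difference class $\cD_{2k,2r} = \{B\in\fS^d:\|B\|_{0,0}\le 2k,\ \rk(B)\le 2r\}$, writing
\begin{equation*}
\llangle\nabla\zeta, B\rrangle_F = \sum_{(i,j)\in\Omega}\bigl(Y_{(i,j)}-\pi_{ij}(\Ts)\bigr)\,X_i^\top B X_j,
\end{equation*}
which is a linear combination of independent centered Bernoulli-type increments. Using a peeling (localization) argument, I would normalize by $\|\bX^\top B\bX\|_{F,\Omega}$ and control the supremum of the resulting process via Dudley's entropy integral together with Bousquet's concentration inequality, exactly as in the proof of Theorem~\ref{ThmUpperBound}. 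The covering number of $\cD_{2k,2r}\cap\{\|\bX^\top B\bX\|_{F,\Omega}=1\}$ is handled in two pieces: a union bound over the $\binom{d}{2k}\le (de/2k)^{2k}$ choices of support blocks (yielding the $k\log(de/k)$ term), and, for each fixed block, the standard $\epsilon$-net for rank-$2r$ matrices on a block of size $2k$ (yielding the $kr$ term). Combining the lower and upper bounds and rearranging gives
\begin{equation*}
\|\hat\Theta_{k,r}-\Ts\|_F^2 \lesssim \frac{1}{\Lm\,N(1-\Delta_{\Omega,2k})}\Bigl(\tfrac{kr}{N}+\tfrac{k}{N}\log(de/k)\Bigr)\cdot N,
\end{equation*}
which, after taking expectations and absorbing constants into $C_3$, delivers the claimed rate.

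\noindent The main obstacle is the empirical process control in the last paragraph: one must set up the peeling so that the self-normalized supremum is bounded uniformly without paying extra logarithmic factors, and one must verify that Bousquet's inequality applies despite the unbounded ambient dimension (the uniform boundedness $|Y_{(i,j)}-\pi_{ij}(\Ts)|\le 1$ and Assumption~\ref{AssParamBound} handle this). Since all of these pieces already appear in Theorem~\ref{ThmUpperBound}, the proof effectively reduces to invoking the same chaining lemma with fixed $(k,r)$ rather than summing over model complexities via the penalty, which is why no logarithmic overhead appears and why the resulting constant $C_3$ is of the same order as $C_1$.
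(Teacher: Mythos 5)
Your proposal follows essentially the same route as the paper's proof: the basic inequality $\ell_Y(\hat\Theta_{k,r})\ge\ell_Y(\Ts)$, the Taylor/strong-convexity lower bound $\ell(\Ts)-\ell(\Theta)\ge\tfrac{\Lm}{2}\|\bX^\top(\Theta-\Ts)\bX\|_{F,\Omega}^2$ combined with the block isometry property, and a bound on the self-normalized supremum $\sup_{\Theta\in\cP_{k,r}(M)}\llangle\mathcal{E}_\Omega,\bX^\top(\Theta-\Ts)\bX\rrangle_F/\|\bX^\top(\Theta-\Ts)\bX\|_{F,\Omega}$ via Dudley's entropy integral, yielding $\sqrt{kr}+\sqrt{k\log(d\ex/k)}$. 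The only cosmetic difference is that the paper, working with a fixed class $\cP_{k,r}(M)$, bounds the expectation of this supremum directly by Dudley's integral and does not need the peeling and Bousquet steps you import from Theorem~\ref{ThmUpperBound}.
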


\begin{remark}
	The penalty \eqref{eq:pen} belongs to the class of the so-called minimal penalties, cf. \cite{birge2007minimal}. In particular, 
	a naive MLE approach with \(p(\Theta) = 0\) in \eqref{eq:penMLE} yields a suboptimal estimator as it follows from Theorem~\ref{ThmConstrainedMLE}.
\end{remark}

\subsection{Convex relaxation}
\label{SEC:feedback}
%Note that the estimator described in the previous section is not computationally tractable. We therefore introduce another penalization in order to obtain a tractable estimator, minimum of a convex function.

In practice, computation of the estimator \eqref{eq:penMLE} is often infeasible. 
In essence, in order to compute it, we need to compare the likelihood functions over all possible subspaces \(\cP_{k,r}(M)\).
Sophisticated step-wise model selection procedures allow to reduce the number of analysed models. 
However, they are not feasible in a high-dimensional setting either. 
%We shall elaborate on the computational issues cropping up for the penalised maximum likelihood estimator further 
%in Section~\ref{SEC:complow}.
We here consider the following estimator 
\begin{equation}
\label{RGLasso}
\Th_{Lasso} =  \argmin_{\Theta \in \fS^d } \{ - \ell_Y(\Theta) + \lambda \|\Theta \|_{1,1}   \} \,,
\end{equation}
with \(\lambda > 0\) to be chosen further, which is equivalent to the logistic Lasso on \(\vect(\Theta)\).  Using standard arguments, cf. Example 1 in  \cite{van2008high}, combined with the block isometry property 
the following result immediately follows.
\begin{theorem}
	\label{GRLassoTheorem}
	
	Assume the design matrix \(\bX\) satisfies the block isometry property from Definition~\ref{AssWellCond} and $\max_{(i,j) \in \Omega} |X_i^\top \Ts X_j| < M$ for some \(M > 0 \) and all \(\Ts\) in a given class. 	
	Then for \(\lambda = C_4\sqrt{\log d}\),  where \(C_4 > 0\) is an appropriate universal constant, 
	the estimator \eqref{RGLasso} satisfies 
	\begin{equation}
	\label{rateUpperBound1}
	\sup_{\Ts \in \cP_{k,r}(M) } \E \big[\|\Th_{Lasso} - \Ts \|_F^2\big] \le
	%\frac{A}{n^2} +
	\frac{C_5}{\Lm(M) \big(1 - \Delta_{\Omega,2k}(\bX) \big)}
	\frac{k^2}{N} \log d\,,
	\end{equation}
	for all \(k = 1,...,d\) and \(r = 1,...,k\) and some universal constant \(C_5 > 0\).
\end{theorem}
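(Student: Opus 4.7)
The plan is to follow the classical Lasso analysis for generalised linear models (Example 1 of \cite{van2008high}), and combine it with the block isometry property of Definition~\ref{AssWellCond} to convert a prediction-error bound into a Frobenius-norm bound on \(\Delta := \Th_{Lasso}-\Ts\).

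First I would invoke the basic inequality coming from optimality of \(\Th_{Lasso}\) in the convex program~\eqref{RGLasso}: testing the objective at \(\Theta=\Ts\) and using the decomposition \(\ell_Y(\Theta)=\ell(\Theta)+\llangle \nabla\zeta,\Theta\rrangle_F\) from Section~\ref{SEC:algo} gives
\[
\Kc(\P_{\Ts},\P_{\Th_{Lasso}}) \;\le\; \llangle \nabla\zeta,\Delta\rrangle_F + \lambda\bigl(\|\Ts\|_{1,1} - \|\Th_{Lasso}\|_{1,1}\bigr).
\]
Hölder's inequality controls the stochastic term by \(\|\nabla\zeta\|_\infty\,\|\Delta\|_{1,1}\), so the argument reduces to (i) bounding the dual norm \(\|\nabla\zeta\|_\infty\) by \(\lambda/2\) and (ii) exploiting the \(k\)-by-\(k\) support \(S\) of \(\Ts\) against the \(\ell_{1,1}\) penalty.

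The dual-norm step is routine concentration: each entry \((\nabla\zeta)_{ab}=\sum_{(i,j)\in\Omega}(Y_{(i,j)}-\pi_{ij}(\Ts))(X_jX_i^\top)_{ab}\) is a sum of \(N\) bounded centered independent variables, so Hoeffding's inequality with a union bound over the \(d^2\) entries yields \(\|\nabla\zeta\|_\infty\) of order \(\sqrt{\log d}\) (up to the scaling fixed by \(\bX\)). Choosing \(\lambda=C_4\sqrt{\log d}\) with \(C_4\) large enough thus guarantees \(\lambda\ge 2\|\nabla\zeta\|_\infty\) on a high-probability event. On that event, splitting the \(\ell_{1,1}\) penalty along \(S\) produces simultaneously the cone condition \(\|\Delta_{S^c}\|_{1,1}\le 3\|\Delta_S\|_{1,1}\) and the excess-risk inequality
\[
\Kc(\P_{\Ts},\P_{\Th_{Lasso}}) \;\lesssim\; \lambda\,\|\Delta_S\|_{1,1} \;\le\; \lambda k\,\|\Delta\|_F,
\]
the last step being Cauchy--Schwarz, using \(|S|\le k^2\).

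To translate this into the announced Frobenius bound I would lower bound the Kullback--Leibler divergence by a quadratic form in \(\Delta\). A second-order Taylor expansion of \(-\ell\) around \(\Ts\), combined with Assumption~\ref{AssParamBound} which gives \(\sigma'(X_i^\top\Theta X_j)\ge\Lm(M)\) uniformly along the segment, yields
\[
\Kc(\P_{\Ts},\P_{\Th_{Lasso}}) \;\ge\; \tfrac{1}{2}\Lm(M)\,\|\bX^\top\Delta\,\bX\|_{F,\Omega}^2 .
\]
The block isometry property of Definition~\ref{AssWellCond} (via Lemma~\ref{lemma:RIP}) then lower bounds the prediction norm by \(N(1-\Delta_{\Omega,2k}(\bX))\|\Delta\|_F^2\); inserting this into the previous display gives a quadratic-in-\(\|\Delta\|_F\) inequality whose resolution is exactly~\eqref{rateUpperBound1}. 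Taking expectation and absorbing the negligible contribution of the complement of the good event (routine since \(\Th_{Lasso},\Ts\in\cP(M)\) are bounded) concludes. The main technical subtlety is the block-isometry step: the property is formulated only for genuinely block-sparse matrices whereas the Lasso iterate \(\Th_{Lasso}\), and therefore \(\Delta\), need not be block-sparse. I expect to handle this via the standard compatibility trick, decomposing \(\Delta\) on the Lasso cone into a \(2k\)-block-sparse piece (on which block isometry applies directly) and a controllable residual dominated by \(\|\Delta_{S^c}\|_{1,1}\) through the cone inequality, thereby reducing the restricted eigenvalue condition needed by the standard GLM Lasso argument to the block isometry hypothesis available here.
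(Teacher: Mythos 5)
Your proposal follows essentially the same route as the paper: the paper's proof of this theorem consists precisely of invoking the standard GLM Lasso argument (Example 1 of van de Geer, 2008) combined with the block isometry property, which is exactly the chain you spell out — basic inequality from optimality, dual-norm bound on $\nabla \zeta$ with $\lambda \asymp \sqrt{\log d}$, cone/compatibility step giving $\lambda k\|\hat\Theta_{Lasso}-\Ts\|_F$, curvature of the Kullback--Leibler divergence via $\Lm(M)$, and block isometry to pass from the prediction norm to the Frobenius norm. The additional details you supply, including the caveat about applying block isometry to the non-block-sparse error and resolving it through the standard restricted-eigenvalue/compatibility reduction, are the same ``standard arguments'' the paper leaves implicit, so there is no genuinely different approach to compare.
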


As one could expect the upper bound on the rate of estimation  of our feasible 
estimator is independent of the true rank \(r\). 
It is natural, when dealing with a low-rank and block-sparse objective matrix,
to combine the nuclear penalty with either the \((2,1)\)-norm penalty or the \((1,1)\)-norm penalty of a matrix,
cf. \cite{giraud2011low, koltchinskii2011, bunea2012joint, richard2012estimation}. 
In our setting, it can be easily shown that combining the \((1,1)\)-norm penalty and the nuclear penalty yields the same 
rate of estimation \((k^2 / N)\log d\).
This appears to be inevitable in view of  
a computational lower bound, obtained in Section~\ref{SEC:complow}, 
which  is independent of the rank as well. In particular, these findings partially answer a question posed in Section~6.4.4 in
\cite{giraud2014introduction}.

\subsection{Prediction}
\label{SEC:slow}

%\begin{theorem}[Expected risk]
%\label{THM:slow}

%Let 
%\end{theorem}

In applications, as new users join the network, we are interested in predicting 
the probabilities of the links between them and the existing users. 
%Let \(\X^\prime \in \R^d \) be
%an additional sample vector and let \(\Y^\prime \in \{
%0,1\}^{n}\) % (Y^\prime_{i})_{1 \le i \le n}\) 
%follow our modelling assumption \eqref{Pyij}.
It is natural to measure the 
prediction error of an estimator \(\Th\)  by \(	\E\big[ \sum_{(i,j)\in \Omega} (\pi_{ij}(\Th) - \pi_{ij}(\Ts))^2\big] \) which is controlled according to the following result using the smoothness of the logistic function \(\sigma\).	

%		which can be decomposed using a version of the bias-variance tradeoff as
%	\begin{align*}
%	\E\big[ \sum_{(i,j)\in \Omega} (Y_{(i,j)} - \pi_{ij}(\Th))^2\big] 
%	&= \E\big[ \sum_{(i,j)\in \Omega} (Y_{(i,j)} - \pi_{ij}(\Theta_\star))^2\big] \\
%	&\quad\quad+ 
%	\E\big[ \sum_{(i,j)\in \Omega} ( \pi_{ij}(\Th) - \pi_{ij}(\Theta_\star))^2\big] \\
%	&\le\sum_{ (i,j) \in \Omega }  \sigma^\prime( X_i^\top \Ts  X_j) +  \frac{1}{16} \E\big[\|\bX^\top \Th \bX - \bX^\top \Ts \bX \|^2_{F,\Omega}\big]
%	\end{align*}
%	using that \(\sup_{t\in \R}\sigma^\prime(t) \le 1/4 \). The first term depends on the true matrix \(\Ts\),
%	grows like \(\bigO(N)\), and is unavoidable.
%	The control of the second term is guaranteed in view of Assumption~\ref{AssParamBound} and Theorem~\ref{ThmUpperBound}. 
%	It is summarised in the following result.

\begin{theorem}
	\label{ThmPredUpperBound}
	Under Assumption~\ref{AssParamBound}, 
	we have the following rate for estimating the 
	matrix of probabilities \( \Sigs = \bX^\top \Ts \bX \in \R^{n\times n}\) with the estimator \(\Sigh = \bX^\top \Th \bX\in \R^{n\times n}\):
	\[
	\sup_{\Ts \in \cP_{k,r}(M)  } \frac{1}{2N}\E\big[ \|\Sigh - \Sigs \|_{F,\Omega}^2\big] \le
	% \frac{A}{n^2}+  \frac{4C_{1}}{3}
	%\frac{kr}{n^2} + \frac{4C_{2}}{3}\frac{k}{n^2} \log\big(\frac{d\ex}{k}\big)  
	\frac{C_1}{\Lm(M) } \Big( \frac{kr}{N} + \frac{k}{N} \log\big(\frac{d\ex}{k}\big) \Big) \,,
	\]
	with the constant \(C_1\) from \eqref{rateUpperBound0}.
	The rate is minimax optimal, i.e. a minimax lower bound of the same asymptotic order holds for the prediction error of estimating the 
	matrix of probabilities \( \Sigs = \bX^\top \Ts \bX \in \R^{n\times n}\).
\end{theorem}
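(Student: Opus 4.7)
The plan is to treat the two halves of the statement separately. The upper bound follows directly from Theorem~\ref{ThmUpperBound} by relating the KL divergence to the squared prediction discrepancy via a second-order expansion of the logistic log-likelihood. The matching lower bound is obtained by transferring the Frobenius-norm minimax lower bound of Section~\ref{MinimaxLowerBound} through the block isometry property.

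For the upper bound, the key step is to show that each term $\KL(\pi_{ij}(\Ts), \pi_{ij}(\Th))$ controls the squared discrepancy $(\Sigh_{ij} - \Sigs_{ij})^2$. Writing $\xi(x) = \log(1+e^x)$, one has $\xi''(x) = \sigma(x)(1-\sigma(x))$, which under Assumption~\ref{AssParamBound} is bounded below by $\Lm(M)$ on $[-M, M]$. A second-order Taylor expansion of $\Theta \mapsto \KL(\pi_{ij}(\Ts), \pi_{ij}(\Theta))$ around $\Ts$, in which the first-order term vanishes because the KL is minimised at $\Ts$, yields
\begin{equation*}
\KL(\pi_{ij}(\Ts), \pi_{ij}(\Th)) \ge \tfrac{1}{2}\Lm(M)(\Sigma_{ij}(\Th) - \Sigma_{ij}(\Ts))^2 = \tfrac{1}{2}\Lm(M)(\Sigh_{ij} - \Sigs_{ij})^2.
\end{equation*}
Summing over $(i,j) \in \Omega$ and using $\KL(\P_\Ts, \P_\Th) = \sum_{(i,j)\in\Omega}\KL(\pi_{ij}(\Ts), \pi_{ij}(\Th))$ produces $\KL(\P_\Ts, \P_\Th) \ge \tfrac{1}{2}\Lm(M)\|\Sigh - \Sigs\|_{F,\Omega}^2$. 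Rearranging, dividing by $N$, taking expectations and invoking Theorem~\ref{ThmUpperBound} gives the stated rate with constant $C_1/\Lm(M)$.

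For the minimax lower bound, I transfer the Frobenius-norm result from Section~\ref{MinimaxLowerBound} through Definition~\ref{AssWellCond}. For any $\Ts, \Th \in \cP_{k,r}(M)$, the difference $\Th - \Ts$ is block-sparse of order at most $2k$, so the left inequality of Definition~\ref{AssWellCond} applies with $s = 2k$, giving
\begin{equation*}
\|\Sigh - \Sigs\|_{F,\Omega}^2 = \|\bX^\top(\Th - \Ts)\bX\|_{F,\Omega}^2 \ge N\bigl(1 - \Delta_{\Omega,2k}(\bX)\bigr)\|\Th - \Ts\|_F^2.
\end{equation*}
Taking expectations, the supremum over $\Ts \in \cP_{k,r}(M)$ and the infimum over estimators $\Th$, the Frobenius-norm minimax lower bound of order $kr/N + (k/N)\log(d\ex/k)$ transfers to a prediction lower bound of the same asymptotic order, matching the upper bound up to constants.

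The main subtlety is justifying the Taylor step: one must ensure the intermediate point of the expansion lies in the range where $\xi'' \ge \Lm(M)$. This is handled by the convexity of $[-M, M]$, since both $\Sigma_{ij}(\Ts)$ and $\Sigma_{ij}(\Th)$ lie there under Assumption~\ref{AssParamBound}, hence so does every point on the segment between them. Apart from this check, both directions reduce to repackaging results that are already in hand.
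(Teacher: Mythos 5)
Your upper-bound argument is correct and is essentially the paper's own: the per-entry Taylor expansion you perform is exactly the displayed inequality \eqref{TayMean}, i.e. $\KL(\P_{\Ts},\P_{\Th})=\ell(\Ts)-\ell(\Th)\ge \tfrac{\Lm(M)}{2}\|\bX^\top(\Th-\Ts)\bX\|_{F,\Omega}^2=\tfrac{\Lm(M)}{2}\|\Sigh-\Sigs\|_{F,\Omega}^2$, with the intermediate point controlled by Assumption~\ref{AssParamBound} because $\Th$ is constrained to $\cP(M)$ in \eqref{eq:penMLE}; combined with Theorem~\ref{ThmUpperBound} this gives the stated bound with constant $C_1/\Lm(M)$, as in the paper.

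The lower-bound half, however, has a genuine gap. You apply the block isometry property to $\Th-\Ts$ ``for any $\Ts,\Th\in\cP_{k,r}(M)$'', but in the minimax statement the infimum runs over \emph{all} estimators: an arbitrary estimator $\Th$ (or, more to the point, an arbitrary estimator $\Sigh$ of $\Sigs$) need not take values in $\cP_{k,r}(M)$, so $\Th-\Ts$ need not be $2k$-block-sparse and the left inequality of Definition~\ref{AssWellCond} simply does not apply to it. As written, your pointwise inequality therefore only yields a prediction lower bound over class-valued estimators, which is not what the theorem asserts. The repair is standard but must be said: either add a projection/testing step (given an arbitrary $\Sigh$, select the hypothesis $\Theta_u$ in the packing set minimizing $\|\Sigh-\bX^\top\Theta_u\bX\|_{F,\Omega}$; by the triangle inequality a small prediction error forces the correct selection, at the cost of a constant factor), or, more directly in the spirit of the paper's remark, rerun the Fano argument of Theorem~\ref{LowerBoundThm} with separation measured in the prediction semi-norm: the packing sets $\mathcal{B}^\circ$ and $\G_k^{\alpha_N,0}$ constructed there consist of matrices of the class, so their pairwise differences \emph{are} $2k$-block-sparse, and Definition~\ref{AssWellCond} converts the Frobenius $2\delta$-separation into a separation of order $N\big(1-\Delta_{\Omega,2k}(\bX)\big)\delta^2$ for $\|\bX^\top(\Theta_u-\Theta_v)\bX\|_{F,\Omega}^2$, while the Kullback--Leibler bounds between the hypotheses are unchanged; Proposition~\ref{FanosMethod}, which holds verbatim for this semi-metric in place of the Frobenius metric, then gives the prediction lower bound of the same asymptotic order for all estimators.
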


\subsection{Information-theoretic lower bounds}
\label{SEC:inflow}
\label{MinimaxLowerBound}
The following result demonstrates 
that the minimax lower bound on the rate of estimation matches the upper bound in Theorem \ref{ThmUpperBound} 
implying that the rate of estimation is minimax optimal. 
\begin{theorem}
	\label{LowerBoundThm} 
	Let the  design matrix \(\bX\) satisfy the block isometry property. Then for estimating \(\Ts \in \cP_{k,r}(M) \) in the matrix logistic regression model, 
	the following lower bound on the rate of estimation holds
	\[
	\inf_{\Th} \sup_{\substack{\Ts \in  \cP_{k,r}(M)} } \E \big[\| \Th - \Ts\|^2_F\big] \ge 
	\frac{C_2}{(1 + \Delta_{\Omega,2k}(\bX))}\Big(\frac{kr}{N} + \frac{k}{N}\log\big(\frac{d\ex}{k}\big) \Big)\,,
	\]
	where the constant \(C_2 > 0\) is independent of \(d,k,r\) and the infimum extends over all estimators \(\Th\). 
\end{theorem}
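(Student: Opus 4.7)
I would establish the lower bound via two separate Fano-type arguments, one producing the $kr/N$ term and one producing the $(k/N)\log(de/k)$ term, and then combine them using $\max(a,b)\ge (a+b)/2$. The common ingredient is a reduction of the Kullback--Leibler divergence to the Frobenius distance. Under Assumption~\ref{AssParamBound}, for $a,b$ with $|a|,|b|\le M$ one has $\KL(\sigma(a),\sigma(b)) \le C_M (a-b)^2$ with $C_M$ depending only on $M$. Summing over $(i,j)\in\Omega$ and applying the \emph{upper} side of the block isometry property to the difference $\Theta-\Theta' \in \fS^d$, which has block-sparsity at most $2k$, gives
\begin{equation*}
\KL(\P_{\Theta},\P_{\Theta'}) \;\le\; C_M \sum_{(i,j)\in\Omega}\bigl(X_i^\top(\Theta-\Theta')X_j\bigr)^2 \;\le\; C_M N\bigl(1+\Delta_{\Omega,2k}(\bX)\bigr)\,\|\Theta-\Theta'\|_F^2.
\end{equation*}
This is precisely where the factor $1+\Delta_{\Omega,2k}(\bX)$ in the denominator originates.

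\textbf{Sparsity term.} By a Varshamov--Gilbert argument on $k$-subsets of $[d]$, one obtains supports $S_1,\dots,S_{M_1}\subseteq[d]$ with $|S_i|=k$, $|S_i\cap S_j|\le 7k/8$, and $\log M_1 \ge c_1 k\log(de/k)$. Define the rank-one candidates $\Theta_i = \epsilon_1\, \mathbf{1}_{S_i}\mathbf{1}_{S_i}^\top \in \cP_{k,r}(M)$ (for $\epsilon_1$ small enough that the $\ell_{1,1}$ and $M$-boundedness constraints are met). A direct computation yields $\|\Theta_i-\Theta_j\|_F^2 = 2\epsilon_1^2(k^2-|S_i\cap S_j|^2)\gtrsim \epsilon_1^2 k^2$. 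Applying Fano with the KL bound above, the choice $\epsilon_1^2 \asymp \log(de/k)/\bigl(N(1+\Delta_{\Omega,2k})\,k\bigr)$ keeps $\max_{i\ne j}\KL(\P_{\Theta_i},\P_{\Theta_j}) \le \tfrac{1}{4}\log M_1$, giving the lower bound $\epsilon_1^2 k^2 \asymp k\log(de/k)/\bigl(N(1+\Delta_{\Omega,2k})\bigr)$.

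\textbf{Rank term.} Fix a support $S=[k]$ and consider the manifold of symmetric $k\times k$ matrices of rank at most $r$, padded by zeros to live in $\fS^d$ with support in $S\times S$. This manifold has dimension of order $kr$, so a standard volumetric/covering argument (as in Candès--Plan or Negahban--Wainwright for low-rank matrices) yields a family $\Theta_1,\dots,\Theta_{M_2}$ in the Frobenius ball of radius $\epsilon_2$ with pairwise separation $\ge \epsilon_2/4$ and $\log M_2 \ge c_2\, kr$. Each $\Theta_i\in \cP_{k,r}(M)$ for $\epsilon_2$ small. Fano now requires $C_M N(1+\Delta_{\Omega,2k})\epsilon_2^2 \le \tfrac{1}{4}\log M_2 \asymp kr$, whence $\epsilon_2^2 \asymp kr/\bigl(N(1+\Delta_{\Omega,2k})\bigr)$ is feasible and delivers a lower bound of this order.

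\textbf{Combining and main obstacle.} Since both constructions live in $\cP_{k,r}(M)$, taking the maximum of the two lower bounds and using $\max(a,b)\ge(a+b)/2$ yields the claim. The chief technical point is the rank-term packing: one needs to certify that the rank-$\le r$ symmetric matrices of size $k\times k$ admit a Frobenius packing of log-cardinality $\gtrsim kr$ at a scale matched to the KL budget. I would either quote the existing packing lemma for low-rank matrix manifolds or build such a family explicitly by taking $\Theta_\omega = \alpha\sum_{\ell=1}^{r} u_\ell u_\ell^\top + \beta \sum_{\ell=1}^{r} V_\ell(\omega)$, where $V_\ell(\omega)$ is a tangent-space perturbation indexed by a Gilbert--Varshamov codeword $\omega\in\{-1,1\}^{c_2 kr}$, and verify, via a small-perturbation expansion, that the perturbed matrices remain rank $r$ with the requisite Frobenius separation. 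The sparsity term and the KL-to-Frobenius reduction are standard, so the volumetric packing is the only step where care is needed.
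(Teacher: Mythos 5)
Your overall architecture coincides with the paper's: two Fano-type constructions (one for the $kr/N$ term, one for the $(k/N)\log(d\ex/k)$ term) combined via $\max(a,b)\ge (a+b)/2$, with the Kullback--Leibler divergence reduced to $\sum_{(i,j)\in\Omega}\bigl(X_i^\top(\Theta-\Theta')X_j\bigr)^2$ and then to $N(1+\Delta_{\Omega,2k}(\bX))\|\Theta-\Theta'\|_F^2$ by the upper side of the block isometry property; your sparsity-term packing (constant-entry rank-one blocks on Varshamov--Gilbert-separated supports) is essentially the paper's set $\G_k^{\alpha_N}$ together with its variant of the Varshamov--Gilbert lemma. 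One small simplification available to you: the KL bound does not need Assumption~\ref{AssParamBound} at all, since $\sup_t\sigma'(t)\le 1/4$ gives the quadratic bound with an absolute constant.

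Where you genuinely diverge is the rank term. The paper follows the construction of Theorem~5 in \cite{koltchinskii2011}: binary $k\times r$ blocks with entries in $\{0,\alpha_N\}$, replicated $\lfloor k/r\rfloor$ times, symmetrised, and then separated by the standard Varshamov--Gilbert lemma applied to the $kr$ binary entries; this yields $\log$-cardinality of order $kr$ with the right Frobenius separation while keeping symmetry, rank at most $r$, and support in a $k\times k$ block automatically. Your volumetric route is viable provided you invoke an actual \emph{packing} (not merely covering) bound for the rank-$\le r$ ball, but your explicit fallback has a flaw as written: an additive tangent-space perturbation $\alpha\sum_{\ell}u_\ell u_\ell^\top+\beta\sum_{\ell}V_\ell(\omega)$ with $V_\ell=u_\ell w_\ell^\top+w_\ell u_\ell^\top$ generically has rank up to $2r$, not $r$, so the candidates would leave $\cP_{k,r}(M)$. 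The repair is to perturb multiplicatively, e.g.\ take $\alpha\sum_\ell (u_\ell+\beta w_\ell(\omega))(u_\ell+\beta w_\ell(\omega))^\top$, or simply to adopt the binary-block construction above; with that modification your argument delivers the same bound with the same $1+\Delta_{\Omega,2k}(\bX)$ dependence as the paper.
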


\begin{remark}
	The lower bounds of the same order hold 
	for the   
	expectation of the Kullback-Leibler divergence between the measures \(\P_{\Ts}\) and \(\P_{\Th}\) and  the prediction error of estimating the 
	matrix of probabilities \( \Sigs = \bX^\top \Ts \bX \in \R^{n\times n}\).
\end{remark}

%\newpage

\section{Computational lower bounds}
\label{SEC:complow}

In this section, we investigate whether the lower bound in Theorem~\ref{LowerBoundThm} can be achieved with an estimator 
computable in polynomial time. The fastest rate of estimation  attained by a (randomised) polynomial-time algorithm 
in the worst-case scenario is usually referred to as a \emph{computational lower bound}.
Recently, the gap between computational and statistical lower bounds  
has attracted a lot of attention in the statistical community. We refer to  \cite{ berthet2013complexity,berthet2013optimal,wang2014statistical,gao2014sparse, zhang2017tensor, Hajek151, Chen2015a, ma2015computational, ChenXu2016} 
for computational lower bounds in high-dimensional statistics based on the planted clique problem (see below), \cite{BerEll15} using hardness of learning parity with noise \cite{oymak2015simultaneously} for 
denoising of sparse
and low-rank matrices, \cite{Agarwal2012} for computational trade-offs in statistical learning, as well as \cite{ZhaWaiJor14} for worst-case lower bounds for sparse estimators in linear regression, as well as \cite{BruTroCev15,ChaJor13} for another approach on computational trade-offs in statistical problems, as well as \cite{BerCha16,BerPer17} on the management of these trade-offs. 
In order to establish a computational lower bound for the block-sparse matrix logistic regression, we exploit a reduction 
scheme from \cite{berthet2013complexity}: we show that detecting a subspace of \(   \cP_{k,r}(M)\) 
can be computationally as hard as solving  
the dense subgraph detection problem.

\subsection{The dense subgraph detection problem}
Although our work is related to the study of graphs, we recall for absolute clarity the following 
notions from graph theory. 
A \emph{graph} \(G = (V,E )\) is a non-empty set \(V\) of \emph{vertices}, together with a set 
\(E\) of distinct unordered pairs \(\{i,j \}\) with \(i,j  \in V\), \(i \neq j\). Each element 
\(\{i,j \}\) of \(E\) is an edge and joins \(i\) to \(j\). The vertices of an edge are called its endpoints.
We consider only undirected graphs with neither loops nor multiple edges. 
A graph is called \emph{complete} if every pair of distinct vertices is connected.
A graph \(G^\prime = (V^\prime,E^\prime )\) is a subgraph of a graph \(G = (V,E ) \) 
if \(V^\prime \subseteq V\) and \(E^\prime \subseteq E\). A subgraph \(C\) is called a \emph{clique}
if it is complete. The problem of detecting a maximum clique, or all cliques, the so-called Clique problem, 
in a given graph is known to be \(\NP\)-complete, 
cf. \cite{karp1972reducibility}.

The Planted Clique problem, motivated as an average case version of the Clique problem,  
can be formalised as a decision problem over random graphs, 
parametrised by the number of vertices \(n\) and  the size of the subgraph  \(k\). %and 
% the probability of the link between any two vertices in the subgraph \(q \). 
Let \(\mathbb{G}_n\) denote the collection of all graphs with \(n\) vertices 
and \(G(n, 1/2)\) denote distribution of Erd\"os-R\'enyi random graphs, uniform on \(\mathbb{G}_n\), 
where each edge is drawn independently at random with probability \(1/2\).
For any
\(k \in \{ 1, . . . , n\}\) and \(q \in (1/2,1]\), let \(G(n,1/2,k,q)\) be a distribution on \(\mathbb{G}_n\) constructed by first
picking \(k\) vertices independently at random
and connecting all edges in-between  with probability \(q \),
and then joining
each remaining pair of distinct vertices by an edge independently at random with probability \(1/2\). 
Formally, the Planted Clique problem refers to the hypothesis testing problem of 
\begin{equation}
H_0: A \sim G(n,1/2) \quad \text{vs.} \quad H_1: A \sim G(n,1/2,k,1)\,,
\label{HypoTestPlCliq1}
\end{equation}
based on observing an adjacency matrix \(A \in \R^{n\times n}\) of a random graph drawn from either \(G(n,1/2)\)
or \(G(n,1/2,k,1)\).

One of the main  properties of the Erd\"os-R\'enyi random graph were studied in \cite{ErdRen59}, as well as in \cite{grimmett1975colouring}, who in particular proved that the size of the largest clique in \(G(n,1/2)\) is asymptotically close to \(2\log_2 n\) almost surely. 
On the other hand, \cite{alon1998} proposed a spectral method that for 
\(k > c \sqrt{n}\) detects a planted clique with high probability in polynomial time. Hence the most intriguing regime for \(k\) 
is 
\begin{equation}
2\log_2n \le  k \le c \sqrt{n}\,.
\label{HardnessRegimerr}
\end{equation}
The conjecture that no polynomial-time algorithm exists for distinguishing between hypotheses in \eqref{HypoTestPlCliq1} in the regime 	\eqref{HardnessRegimerr}  with probability tending to \(1\) as \(n \to \infty\)
is the famous Planted Clique conjecture in complexity theory.
Its variations have been used extensively as computational hardness assumptions in statistical problems, see \cite{berthet2013optimal,wang2014statistical, gao2014sparse, cai2018}. 

The Planted Clique problem can be reduced to the so-called dense subgraph detection problem of testing the null hypothesis in \eqref{HypoTestPlCliq1} against the alternative \(H_1: A \sim G(n,1/2,k,q)\), where \(q \in (1/2,1]\). This is clearly a computationally harder problem.
In this paper, we assume the following variation of the Planted Clique conjecture which is used to establish a computational lower bound in the matrix logistic regression model. 

%Another famous Planted Clique problem 
%	is a special case of the dense subgraph detection problem corresponding to \(q = 1\).  
%	It is clearly a computationally easier problem.

%	The dense subgraph detection problem

%		A famous Planted Clique problem is 
\begin{conjecture}[The dense subgraph detection conjecture] 
	\label{Conjecture}
	For any sequence \(k = k_n\) such that \(k \le n^\beta\) for some \(0 < \beta < 1/2\), and any \(q \in (1/2,1]\), there is no 
	(randomised) polynomial-time algorithm that can correctly identify the dense subgraph 
	with probability tending to \(1\) as \(n \to \infty\), i.e. for any sequence of (randomised) polynomial-time tests 
	\((\psi_n: \mathbb{G}_n \to \{0,1 \})_n\), we have 
	\[
	\liminf_{n \to \infty}  \big\{ \P_0(\psi_n(A) = 1) + \P_1(\psi_ n(A) = 0)\big\} \ge 1/3\,.
	\]
\end{conjecture}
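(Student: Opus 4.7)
Strictly speaking, this is a complexity-theoretic assumption rather than a claim with a known proof: my ``proof proposal'' is to show that it reduces to, and is essentially equivalent to, the Planted Clique conjecture, whose hardness I would then take as given.

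First, I would observe that the special case $q=1$ of the conjecture is exactly the classical Planted Clique hypothesis testing problem in the regime $k \le n^{\beta}$ for $\beta<1/2$. This is a well-studied assumption with substantial evidence: the spectral method of Alon--Krivelevich--Sudakov solves the task only for $k \gtrsim \sqrt{n}$, and unconditional lower bounds are known against restricted models such as sum-of-squares hierarchies of polylogarithmic degree, statistical query algorithms, and low-degree polynomial tests. For this regime of $q$ the conjecture is therefore just a restatement of the classical Planted Clique conjecture.

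For $q \in (1/2,1)$ I would try to deduce hardness from the $q=1$ case via an average-case reduction. Intuitively the problem should only become harder as $q$ decreases toward $1/2$ because the planted signal weakens, so the reduction should go from $q=1$ to $q<1$. Naive independent edge thinning fails, since thinning all edges drives the background probability below $1/2$. I would therefore attempt a symmetric-difference construction $A' = A \triangle H$ with $H$ an independent Erd\H{o}s--R\'enyi graph whose parameters are tuned so that the null $G(n,1/2)$ is preserved while the planted edge probability lands at the target $q$; one may also need to allow $k$ to shrink by a constant factor to accommodate the whole range of $q$. Verifying that such a coupling realises $G(n,1/2,k,q)$ exactly, and that it maps polynomial-time tests to polynomial-time tests with at most constant loss in error probability, is the technical heart of the plan.

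The main obstacle is clear: the Planted Clique conjecture itself has resisted proof for three decades, so any approach aiming higher than a conditional reduction will require substantially new tools in average-case complexity. The realistic fallback, and the one implicit in this paper, is to adopt the dense-subgraph variant as a standing assumption, justified by (i) the reduction sketched above to the Planted Clique conjecture and (ii) the uniform failure of every known polynomial-time algorithm in the conjectured hard regime, reinforced by the unconditional lower bounds in restricted models cited earlier.
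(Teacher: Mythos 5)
This statement is a hardness conjecture, not a theorem: the paper gives no proof and explicitly adopts it as a standing assumption, remarking only that the Planted Clique problem ``can be reduced to'' the dense subgraph detection problem. Your treatment is therefore essentially the paper's own stance --- identify the $q=1$ case with the classical Planted Clique conjecture and argue the general-$q$ version is at least as hard --- but you go further by sketching the reduction, and your sketch is correct. Indeed the symmetric-difference construction works exactly: given the observed adjacency matrix $A$, sample an independent Erd\H{o}s--R\'enyi graph $H$ with edge probability $1-q$ and output $A' = A \triangle H$. Under the null, $\mathrm{Bernoulli}(1/2)$ edges XORed with independent $\mathrm{Bernoulli}(1-q)$ edges remain $\mathrm{Bernoulli}(1/2)$, so $G(n,1/2)$ maps to $G(n,1/2)$; under the alternative, clique edges become present with probability $q$ while all other edges stay at $1/2$, so $G(n,1/2,k,1)$ maps exactly to $G(n,1/2,k,q)$. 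The map is randomised polynomial-time, preserves $k$, and transfers any test for the dense subgraph problem to a test for Planted Clique with identical error probabilities, so your hedge about shrinking $k$ by a constant factor is unnecessary. Consequently the conjecture, for every fixed $q\in(1/2,1]$, is implied by the standard Planted Clique conjecture in the regime $k\le n^{\beta}$, $\beta<1/2$, which is the strongest justification currently available (an unconditional proof being, as you note, far beyond present techniques); this makes precise the paper's one-line assertion that dense subgraph detection is the computationally harder problem.
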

%The formulation of this conjecture is taken from \cite{wang2016}, see their Assumption (A1).

\subsection{Reduction to the dense subgraph detection problem and a computational lower bound}
\label{ReductionSect}
Consider the  vectors of explanatory variables    \(X_i = N^{1/4} e_i\), \(i =1,..., n\) and assume without loss of generality that 
the observed set of edges \(\Omega\) in the matrix logistic regression model consists of the interactions of the  
\(n \) nodes  \(X_i\), i.e. it holds \(N = |\Omega| ={n \choose 2} \).
It follows from the matrix logistic regression modelling assumption \eqref{eq:genModel}
that the Erd\"os-R\'enyi graph \(G(n,1/2)\) corresponds to a random graph associated with the 
matrix \(\Theta_0 = 0 \in \R^{d\times d}\). 
% Then the design matrix \(\X\) is 
%perfectly incoherent \(\mu(\X^\top) = 0\) \NB. 
Let \( \G_{l}(k)\) be a subset of \(\cP_{k,1}(M)\) with a fixed support \(l\) of the block.
In addition, let 
%\begin{EQA}[c]
%\G_k^\alpha := \{ \Theta_l \in \G_{l}(k), l=1,...,K\} 
%\end{EQA}
\(\G_k^{\alpha_N} \subset \cP_{k,1}(M)\) be a subset consisting of the matrices \(\Theta_l \in \G_{l}(k), l=1,...,K\), \(K = {n \choose k}\) such that 
all elements in the block of a matrix \(\Theta_l\) equal some \( \alpha_N = \alpha/\sqrt{N} >0\), see  Figure~\ref{HB2}.
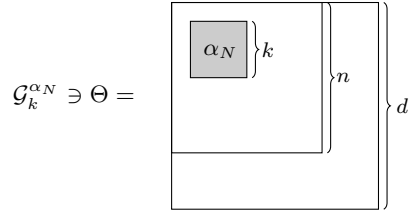
\begin{figure}[tp]
	\centering
	\begin{tikzpicture}[every node/.style={minimum size=.5cm-\pgflinewidth, outer sep=0pt}]
	\draw[fill=gray!40!white] (-0.75,0) rectangle (0,0.75);
	\draw (-1,-1) rectangle (1,1);
	\draw[decoration={brace,mirror},decorate]
	(0.07,0) -- node[xshift=6pt] {\text{\scriptsize \(k\)} } (0.07,0.75);
	\draw[decoration={brace,mirror},decorate]
	(1.07,-1) -- node[xshift=6pt] {\text{\scriptsize \(n\)} } (1.07,1);
	\draw[decoration={brace,mirror},decorate]
	(1.82,-1.75) -- node[xshift=7pt] {\text{\scriptsize \(d\)} } (1.82,1);
	\draw (-1,-1.75) rectangle (1.75,1);
	\node at (-0.35,0.35) {\(\alpha_N\)};
	
	%  \draw[step=0.5cm,color=black] (-1,-1) grid (1,1);
	%  \node[fill=gray!40!white] at (-0.25,+0.25) {};
	%   \node[fill=red] at (-0.25,+0.75) {};
	% \node[fill=orange] at (+0.25,+0.75) {};
	%  \node[fill=yellow] at (+0.75,+0.75) {};
	\node at (-2.3,-0.25) {\(\G_k^{\alpha_N} \ni \Theta  = \)};
	\end{tikzpicture}
	\caption{The construction of matrices \(\G_k^{\alpha_N} \)  used in the reduction scheme.}
	\label{HB2}
\end{figure} 
Then we have 
\[
\fP\big((i,j) \in E   | X_i, X_j) = \frac{1}{1 + e^{-X_i^\top \Theta X_j}} = \frac{1}{1 + e^{-\alpha}}\,,
\]
for all \(\Theta \in \G_k^{\alpha_N}\).
%Similarly to the planted clique, let \(G(n,1/2,k,q)\) be a distribution on \(\mathbb{G}_n\) constructed by first
% picking \(k\) vertices uniformly at random
%and connecting all edges in-between with probability \(q > 1/2\),
%and then joining
%each remaining pair of distinct vertices by an edge independently at random with probability \(1/2\). 
Therefore, the testing problem 
\begin{equation}
H_0: Y \sim \P_{\Theta_0} \quad \text{vs.} \quad H_1: Y \sim \P_{\Theta}, \Theta \in \G_k^{\alpha_N}\,,
\label{HypoTest2}
\end{equation}
where  \(Y \in \{0,1\}^{N}\) is the adjacency vector of binary responses in the matrix logistic regression model, is reduced to
the dense subgraph detection problem with \(q =  1/ (1 + e^{-\alpha})\). 
This reduction scheme suggests that the computational lower bound for separating the hypotheses in the dense subgraph detection problem 
mimics the computational lower bound for separating the hypotheses in \eqref{HypoTest2} in the matrix logistic regression model.
The following theorem exploits this fact in order to establish a computational lower bound of order \(k^2 / N\) for estimating the matrix \(\Ts \in \cP_{k,r}(M)\).

\begin{theorem}
	Let \(\mathcal{F}_k\) be any class of matrices containing \(\G_k^{\alpha_N} \cup \Theta_0\) from the reduction scheme. Let 
	\(c > 0 \) be a positive constant and \(f(k,d,N)\) be a real-valued function 
	satisfying \(f(k,d,N) \le ck^2/N\) for \(k = k_{n} < n^{\beta}\), \(0 < \beta < 1/2\) and a sequence \(d = d_{n}\),
	for all \(n> m_0 \in \N\).
	If Conjecture~\ref{Conjecture} holds, for some the design \(\bX\) that fulfils  the block isometry property from Definition~\ref{AssWellCond}, 
	there is no estimator of  \(\Ts \in \mathcal{F}_k\), 
	that attains the rate \(f(k,d,N)\) for the  Frobenius norm risk,
	and can be evaluated using a (randomised)
	polynomial-time algorithm, i.e. for any estimator \(\Th\), computable in polynomial time, there exists a sequence \((k,d,N) = (k_{n}, d_{n},N)\),
	such that 
	\begin{equation}
	\label{CompLowerBound}
	\frac{1}{f(k,d,N)} \sup_{\Ts \in \mathcal{F}_k }  \E\big[\|\Th - \Ts \|_F^2\big] \to \infty \,,
	\end{equation}
	as \(n \to \infty\).
	Similarly, for any estimator \(\Th\), computable in polynomial time, 
	there exists a sequence \((k,d,N) = (k_{n}, d_{n},N)\), such that 
	\begin{equation}
	\label{CompLowerBoundPred}
	\frac{1}{f(k,d,N)} \sup_{\Ts \in \mathcal{F}_k} \frac{1}{N}\E\big[\|\Sigh - \Sigs\|_{F,\Omega}^2\big] \to \infty \,,
	\end{equation}
	for the prediction error of estimating \(\Sigs = \bX^\top \Ts \bX\).
\end{theorem}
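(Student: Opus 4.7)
The plan is a reduction argument from dense subgraph detection. I would argue by contradiction: assume there is a polynomial-time estimator $\hat\Theta$ for which \eqref{CompLowerBound} fails, i.e., along some sequence $(k_n,d_n,N_n)$ in the admissible regime ($k_n < n^{\beta}$, $0<\beta<1/2$), the ratio in \eqref{CompLowerBound} stays bounded by a constant $C_0$, so $\sup_{\Ts\in\mathcal{F}_k}\E[\|\hat\Theta-\Ts\|_F^2]\le C_0 f(k,d,N)\le C_0 c k^2/N$ on a subsequence. I will then build a polynomial-time test for dense subgraph detection whose total error is strictly below $1/3$, contradicting Conjecture~\ref{Conjecture}.

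For the reduction, I would take the design $X_i = N^{1/4} e_i$ of Section~\ref{ReductionSect} with $d=n$ and $N=\binom{n}{2}$; this trivially gives $\Delta_{\Omega,s}(\bX)=0$, hence the block isometry property. Given an input graph $A$ drawn either from $G(n,1/2)$ (the null) or from $G(n,1/2,k,q)$ with $q=\sigma(\alpha)$ (the planted alternative), I would read $A$ as the response vector $Y$ of a matrix logistic regression instance. By the computation in Section~\ref{ReductionSect}, under $H_0$ we obtain $Y\sim\P_{\Theta_0}$ with $\Theta_0=0\in\mathcal{F}_k$, and under $H_1$ we obtain $Y\sim\P_\Theta$ for an unknown $\Theta\in\G_k^{\alpha_N}\subset\mathcal{F}_k$ with $\|\Theta\|_F^2=k^2\alpha^2/N$.

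For the test itself, I would set $\psi_n=\mathbf{1}\{\|\hat\Theta\|_F^2>\tau\}$ with threshold $\tau=\alpha^2 k^2/(8N)$; this is polynomial-time since $\hat\Theta$ is. Under $H_0$, Markov's inequality applied to $\E[\|\hat\Theta\|_F^2]\le C_0 c k^2/N$ gives $\P_0(\psi_n=1)\le 8C_0 c/\alpha^2$. Under $H_1$, Markov yields $\P_1(\|\hat\Theta-\Theta\|_F^2>\alpha^2 k^2/(4N))\le 4C_0 c/\alpha^2$; on the complementary event, the reverse triangle inequality gives $\|\hat\Theta\|_F\ge\|\Theta\|_F-\alpha k/(2\sqrt N)=\alpha k/(2\sqrt N)$, hence $\|\hat\Theta\|_F^2\ge \alpha^2 k^2/(4N)>\tau$ and $\psi_n=1$. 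Summing the two error probabilities yields total test error at most $12 C_0 c/\alpha^2$, which can be made strictly below $1/3$ by fixing $\alpha>\sqrt{36 C_0 c}$ as an absolute constant, contradicting Conjecture~\ref{Conjecture}. The prediction bound \eqref{CompLowerBoundPred} follows by the same argument applied to $\Sigh=\bX^\top\hat\Theta\bX$ and $\Sigs=\bX^\top\Ts\bX$, using $\|\Sigs\|_{F,\Omega}^2=\alpha^2\binom{k}{2}$ under $H_1$ and $\Sigs=0$ under $H_0$.

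The main obstacle is the interplay of constants: $\alpha$ must be chosen large enough (in terms of $C_0$ and $c$) for the test to beat $1/3$, while simultaneously $\G_k^{\alpha_N}$ must fit inside any implicit identifiability constraint of $\mathcal{F}_k$. Since $\mathcal{F}_k$ in the theorem is arbitrary up to containing $\G_k^{\alpha_N}\cup\{\Theta_0\}$, and since the dense subgraph conjecture is postulated for every $q\in(1/2,1]$, $\alpha$ can indeed be fixed as an absolute constant depending only on $C_0$ and $c$. The verification $\|\Theta\|_{1,1}=k^2\alpha/\sqrt N = O(n^{2\beta-1}) \to 0$ under $\beta<1/2$ guarantees that any $\ell_1$-type constraint in $\mathcal{F}_k$ is met eventually, and $\max_{(i,j)\in\Omega}|X_i^\top\Theta X_j|=\alpha$ is a uniformly bounded constant, so Assumption~\ref{AssParamBound} is not violated along the chosen subsequence.
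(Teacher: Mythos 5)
Your proposal is correct and follows essentially the same route as the paper's proof: the same reduction with design $X_i = N^{1/4}e_i$, the same planted class $\G_k^{\alpha_N}$ with $q=\sigma(\alpha)$, a Markov-plus-thresholding test on the norm of the hypothetical polynomial-time estimator, and a contradiction with Conjecture~\ref{Conjecture} after fixing $\alpha$ as a constant depending on the risk bound. The only (immaterial) difference is the order of the two claims: you prove \eqref{CompLowerBound} directly and deduce \eqref{CompLowerBoundPred} via the identity $\tfrac{1}{N}\|\bX^\top A\bX\|_{F,\Omega}^2=\|A\|_{F,\Omega}^2$ for this design, whereas the paper writes out the prediction bound and notes the estimation bound follows via the block isometry property.
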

\begin{remark}
	Thus the computational lower bound for estimating the matrix \(\Ts\) in the matrix logistic regression model is of order \(k^2/N\) compared to the minimax rate of estimation  of order \(kr/N + (k / N )\log(de/k)\) and the rate of estimation  \((k^2/ N) \log(d)\) for the Lasso estimator \(\Th_{Lasso}\), cf. Figure~\ref{HB}. Hence the computational gap is most noticeable for the matrices of rank \(1\).
	%	Taking \(f(k,d,N) = (k^2/N) \log(ed/k^{2+\delta})\) for any \(\delta > 0\), we obtain that 
	%	there is no polynomial-time estimator of \(\Ts \in \cP_{k,r}(M)\) in the matrix logistic regression that attains the rate \(f(k,d,N)\) 
	%	for the mean squared error in the Frobenius norm.
	%	In particular, the minimax rate cannot be achieved by an estimator computable in polynomial time.
	Furthermore, as a simple consequence of this result, the corresponding computational lower bound for the prediction risk of estimating 
	\(\Sigs = \bX^\top \Ts \bX\) is 
	\(k^2/N \) as well.
\end{remark}
%	\begin{remark}
%		The  bound \eqref{CompLowerBound} holds for random designs satisfying the block isometry property from Definition~\ref{AssWellCond} with high probability, 
%		i.e. for random matrices with i.i.d. entries following sub-Gaussian, Bernoulli and subexponential distributions, cf. Section~\ref{SecRandDesign}.
%	\end{remark}
\begin{proof} 
	We here provide a proof of the computational lower bound on the prediction error \eqref{CompLowerBoundPred} for convenience. 
	%It does not require the design matrix to satisfy 
	%the block isometry property and actually our design matrix in the reduction scheme does %not satisfy it.
	%	Nevertheless, 
	The bound on the estimation error 
	\eqref{CompLowerBound}
	is straightforward to show by %contaminating  
	%	the design matrix in order for it to have the \(\ell_2\)-normalised columns and then
	utilizing  the block isometry property. 
	Assume that there exists  a hypothetical  estimator \(\Th\) computable in polynomial time
	that attains the rate \(f(k,d,N)\) for the prediction 
	error, i.e. such that it holds that
	\[
	\limsup_{n \to \infty} \frac{1}{f(k,d,N)} \sup_{\Ts \in \mathcal{F}_k} 
	\frac{1}{N}\E\big[\|\bX^\top (\Th - \Ts) \bX\|_{F,\Omega}^2\big] \le b <  \infty \,,
	\]
	for all sequences \((k,d,N) = (k_{n}, d_{n},N)\) and a constant \(b\). Then by Markov's inequality, we have
	\begin{equation}
	\label{XTsThX}
	\frac{1}{N} \| \bX^\top (\Th - \Ts) \bX\|_{F,\Omega}^2 \le u f(k,d,N)\,,
	\end{equation}
	for some numeric constant \(u > 0\) with probability \(1 - b/u\)  for all \(\Ts \in \mathcal{F}_k\). 
	Following the reduction scheme, we consider the design vectors \(X_i = N^{1/4} e_i\), \(i = 1,...,n\) and 
	the subset of edges \(\Omega\), such that 
	\begin{equation}
	\label{DeeW}
	\frac{1}{N}	\| \bX^\top (\Th - \Ts) \bX\|_{F,\Omega}^2 = \sum_{(i,j) \in \Omega} ({\Th}_{ij} - {\Ts}_{ij})^2 = \|\Th - \Ts\|_{F,\Omega}^2\,,
	\end{equation}
	for any \(\Ts \in \G_k^{\alpha_N}\). Note, that the design vectors \(X_i = N^{1/4}e_i\), \(i = 1,...,n\) clearly 
	satisfy Assumption~\ref{AssParamBound}. %\footnote{For proving the bound on the estimation error, take \(X_i = n e_i\)}.
	Thus, in order to separate the hypotheses 
	\begin{equation}	
	H_0: Y \sim \P_0 \quad \text{vs.} \quad H_1: Y \sim \P_\Theta, \Theta \in \G_k^{\alpha_N}\,,
	\label{HypoTest3}
	\end{equation}
	it is natural to employ the following test
	\begin{equation}
	\label{TestPsi}
	\psi(Y) = \Ind\big(\|\Th\|_{F,\Omega} \ge  \tau_{d,k}(u)\big)\,,
	\end{equation}
	where \(\tau_{d,k}^2(u) =  u f(k,d,N) \). The type I error of this test is controlled 
	automatically due to \eqref{XTsThX} and \eqref{DeeW}, \(\P_0(\psi = 1) \le b/u\). For the type II error, we obtain
	\begin{align*}
	\sup_{\Theta \in \G_k^{\alpha_N}} \P_{\Theta} (\psi = 0) & =
	\sup_{\Theta \in \G_k^{\alpha_N}} \P_{\Theta} \big(\|\Th\|_{F,\Omega} < \tau_{d,k}(u)\big)                                           \\
	& \le
	\sup_{\Theta \in \G_k^{\alpha_N}} \P_{\Theta} \big(\|\Th- \Theta\|_{F,\Omega}^2 > \| \Theta\|_{F,\Omega}^2 -   \tau_{d,k}^2(u)\big) \le b/u\,,
	\end{align*}
	provided that  
	\[
	k(k-1) \alpha_N^2/2  \ge 2 \tau_{d,k}^2(u) =  2 u f(k,d,N)\,,
	\]
	which is true in the regime \(k \le n^{\beta}\),  \( \beta < 1/2\), and 
	\(\alpha^2 \ge 4u / c\), (hence \(\alpha_N^2 \ge 4u / (cN)\) ) by the definition of the function \(f(k,d,N)\). 
	Putting the pieces together, we obtain
	\[
	\limsup_{n \to \infty}  \big\{ \P_0(\psi(Y) = 1) + \sup_{\Theta \in \G_k^{\alpha_N}} \P_{\Theta}(\psi(Y) = 0)\big\} \le 2b/u < 1/3\,,
	\]
	for \(u > 6b\). 
	Hence, the test \eqref{TestPsi} 
	separates the hypotheses \eqref{HypoTest3}. 
	This contradicts Conjecture~\ref{Conjecture} and implies \eqref{CompLowerBoundPred}.

\end{proof}

\section{Concluding remarks}
\label{SEC:ext}

Our results shed further light on the emerging topic of statistical and computational trade-offs in high-dimensional estimation.
The matrix logistic regression model is very natural to study the connection between statistical accuracy and computational efficiency 
as the model is based on the study of a generative model for random graphs. It is also an extension of lower bound {\em for all} statistical procedures to a model with covariates, the first of its kind.

Our fundings suggest that the block-sparsity is a limiting model selection criterion for polynomial-time estimation in the logistic regression model. That is, imposing further structure, like an additional low-rank constraint, and thus reducing the number of studied models yields an expected gain in the minimax rate, but that 
gain can never be achieved by a polynomial-time algorithm. In this setting, this implies that with a larger parameter space, while the statistical rates might be worse, they might be closer to those that are computationally achievable. 
As an illustration, both efficient and minimax optimal estimation is possible for estimating sparse vectors in the high-dimensional linear regression model, see, 
for example, SLOPE for achieving the exact minimax rate in \cite{bogdan2015slope, Bellec2018},  extending upon previous results on the Danzig selector and Lasso in \cite{bickel2009,candes2007}.

The logistic regression  is also a representative of a large class of generalised linear models. Furthermore, the proof of the minimax lower bound 
on the rate of estimation  in Theorem~\ref{LowerBoundThm} can be extended to all generalised linear models. The combinatorial estimator \eqref{eq:penMLE} can well be used to achieve the minimax rate.
The computational lower bound then becomes a delicate issue. A more sophisticated reduction scheme is needed to relate the dense subgraph detection 
problem to an appropriate testing problem for a generalised linear model. Approaching this question might require notions of noise discretisation and 
Le Cam equivalence studied in \cite{ma2015computational}.

An interesting question is whether it is possible to adopt polynomial-time algorithms available for detecting a dense subgraph for estimating 
the target matrix in the logistic regression model in all sparsity regimes. A common idea behind those algorithms is to search a dense subgraph over 
the vertices of a high degree and thus substantially reduce the number of compared models of subgraphs. 
The network we observe in the logistic regression model is generated by a sparse matrix. We may still observe a fully connected network which is generated by a 
small block in the target matrix. Therefore, it is not yet clear how to adopt algorithms for dense subgraph detection to submatrix detection. It remains an open question to establish whether these results can be extended to any design matrix, and all parameter regimes.
%	It is an open question whether there
%	exist a tractable variable selection procedure based on the penalised MLE in the regime \(k \gtrsim n^{1/2}\).

\bibliographystyle{ims}
\bibliography{ref}

\appendix

\newpage
	\section{Proofs}
\label{SEC:proofs}

\subsection{Some geometric properties of the likelihood}
Let us recall  the \emph{stochastic component} of the likelihood function
\[
\label{StochComp}
\zeta(\Theta) = \Lc(\Theta) - \ell(\Theta)  = 
\sum_{(i,j) \in \Omega} \big(Y_{(i,j)} - \pi_{ij}(\Ts) \big)X_i^\top \Theta X_j \,,
\]
which is a linear function in \(\Theta\). 
The deviation of the gradient \(\nabla \zeta\) of the stochastic component is governed by the deviation of the independent 
Bernoulli random variables \(\varepsilon_{i,j} = Y_{(i,j)} - \E[Y_{(i,j)}] =Y_{(i,j)} - \pi_{ij}(\Ts) \), 
\((i,j) \in \Omega\). Let us introduce an upper triangular matrix  \(\mathcal{E}_\Omega = (\eps_{i,j})_{(i,j) \in \Omega}\) with zeros on the complement set \(\Omega^c\).
In this notation, we have \(\zeta(\Theta) = \llangle\zeta, \Theta \rrangle_F\),
with 
\[
\label{GradStochComp}
\nabla \zeta = \sum_{(i,j) \in \Omega} \varepsilon_{i,j} X_j X_i^\top = \bX \, \mathcal{E}_\Omega  \bX^\top \in \R^{d\times d}\,.
\]
In particular, \(\nabla \zeta\) is sub-Gaussian with parameter 
\( \sum_{(i,j)\in \Omega}\|X_j X_i^\top \|_F^2 / 4  = \| \bX^\top \bX\|_{F, \Omega}^2/4 \), i.e. it holds for the moment generating function of 
\(\llangle\zeta, B \rrangle_F\) for any \(B \in \R^{d\times d}\) and \(\sigma^2 = 1/4\),
\begin{align}
\varphi_{\llangle\zeta, B \rrangle_F }(t) & := \E\big[\exp({t \llangle\zeta, B \rrangle_F})\big] 
= \prod_{(i,j) \in \Omega} \E\big[\exp({t \varepsilon_{i,j} \llangle X_j X_i^\top, B \rrangle_F})\big] \nonumber \\
&\le \prod_{(i,j) \in \Omega} \exp\big({t^2 \sigma^2 \llangle X_j X_i^\top, B \rrangle_F}^2/ 2\big)
=   \exp\big({t \sigma^2  \| \bX^\top B \bX \|_{F, \Omega}^2} /2\big)  \,.
\label{varphillan}
\end{align}
We shall be frequently using versions of the following inequality, which is based on the fact that \(\nabla\ell(\Ts) = 0 \in \R^{d\times d}\), the
Taylor expansion and \eqref{infThGkM}, and holds for 
any \(\Theta \in \cP(M)\),
\begin{align}
\ell(\Ts) - \ell(\Theta)% &=&  %\Kc(\P_{\Ts},\P_{\Theta}) = \sum_{i,j} \Kc( \Psi(X_i^\top \Ts X_j),  \Psi(X_i^\top \Theta X_j)) \\
&=  \frac{1}{2} \sum_{(i,j) \in \Omega}\big( \sigma^\prime (X_i^\top \Theta_0 X_j)\llangle X_j X_i^\top, \Ts - \Theta  \rrangle^2 \big) \nonumber
\\
&\ge
\frac{\Lm}{2} \sum_{(i,j) \in \Omega} \llangle X_j X_i^\top, \Ts - \Theta  \rrangle_F^2 
= \frac{\Lm}{2} \|\bX^\top( \Ts - \Theta ) \bX  \|_{F, \Omega}^2 \,,
\label{TayMean}
\end{align}
where \(\Theta_0 \in [\Theta, \Ts]\) element-wise. Furthermore, using that \(\sup_{t\in \R}\sigma^\prime(t) \le 1/4 \), we obtain for all 
\(\Theta \in \cP(M) \)
\[
\ell(\Ts) - \ell(\Theta)% &=&  %\Kc(\P_{\Ts},\P_{\Theta}) = \sum_{i,j} \Kc( \Psi(X_i^\top \Ts X_j),  \Psi(X_i^\top \Theta X_j)) \\
\le
\frac{1}{8} \|\bX^\top( \Ts - \Theta ) \bX \|_{F,\Omega}^2 \,.
\label{TayUpperBound}
\]
We shall also be using the bounds
\begin{align}
\label{EmpProcBound}
%|\llangle  \Ec  ,\X  (\Theta- \Ts)\X^\top  \rrangle| &\le& \| \X  (\Theta- \Ts)\X^\top  \|\,, \quad \text{a.s.}\,,\\
\max_{(i,j) \in \Omega} \big(\varepsilon_{i,j}  X_i^T  (\Theta- \Ts) X_j \big)&\le \| \bX^\top  (\Theta- \Ts)\bX  \|_{F,\Omega}\,, \quad \text{a.s.}\,,\\
\Var \big(\llangle  \mathcal{E}_\Omega   ,\bX^\top  (\Theta- \Ts)\bX  \rrangle_F \big) &\le \frac{1}{4}\| \bX^\top  (\Theta- \Ts)\bX  \|_{F,\Omega}^2\,.
\label{VarEmpProcBound}
\end{align}

\subsection{Entropy bounds for some classes of matrices}
Recall that an \emph{\(\eps\)-net} of a bounded subset \(\mathbf{K}\) of some metric space with a metric \(\rho\) is a collection \(\{K_1,...,K_{N_\eps} \} \in \mathbf{K}\)
such that for each \(K \in \mathbf{K}\), there exists \(i \in \{1,...,N_\eps\} \) such that \(\rho(K,K_i) \le \eps\). The \emph{\(\eps\)-covering 
	number} \(N(\eps, \mathbf{K}, \rho)\) is the cardinality of the smallest \(\eps\)-net. The \emph{\(\eps\)-entropy} of the class \(\mathbf{K}\) is 
defined by \(H(\eps, \mathbf{K}, \rho) = \log_2 N(\eps, \mathbf{K}, \rho)\). %As before, for two compact subsets \(A\) and \(B\) of the compact convex set \(\Eb\) in \(\R^d\) 
%we denote the symmetric distance between them by \(\rho_1(A,B) = |A\triangle B|\) and the Hausdorff distance by 
%\begin{EQA}[c]
%\rho_H(A,B) = \max \big( \sup_{x \in B} \rho(x, A), \sup_{x \in A} \rho(x, B)\big)\,.
%\end{EQA}
The following statement is adapted from Lemma 3.1 in \cite{candes2011tight}. 
\begin{lemma}
	\label{entrineq} Let \(\T_0 :=  \{\Theta \in \R^{k\times k}: \rank(\Theta) \le r, \| \Theta\|_F \le 1 \}\). Then it holds for any \(\eps > 0\)
	\[
	H(\eps, \T_0, \|\cdot \|_F) \le \big((2k + 1)r + 1\big) \log\big(\frac{9}{\eps}\big)\,.
	\]
\end{lemma}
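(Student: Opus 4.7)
The plan is to use the singular value decomposition to reduce covering a set of low-rank matrices to covering three much simpler sets, and then combine local $\varepsilon/3$-nets via the triangle inequality. Specifically, every $\Theta \in \T_0$ can be written as $\Theta = U \Sigma V^\top$, where $U, V \in \cO_{k,r} := \{W \in \R^{k \times r} : W^\top W = I_r\}$ and $\Sigma$ is an $r \times r$ diagonal matrix whose diagonal, viewed as a vector in $\R^r$, has Euclidean norm $\|\Sigma\|_F = \|\Theta\|_F \leq 1$.

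I would then construct three separate $\varepsilon/3$-nets. First, for the set $\cD := \{\Sigma : \Sigma \text{ diagonal}, \|\Sigma\|_F \leq 1\}$, which is isometric to the unit ball of $\R^r$, a standard volumetric argument gives an $\varepsilon/3$-net of cardinality at most $(9/\varepsilon)^r$ in Frobenius norm. Second, for $\cO_{k,r}$, which is contained in the operator-norm unit ball of $\R^{k \times r}$, I would use an $\varepsilon/3$-net in operator norm of cardinality at most $(9/\varepsilon)^{kr}$, using that the operator-norm unit ball in $\R^{k \times r}$ has covering number at most $(1 + 6/\varepsilon)^{kr}$. The same bound applies for the cover of $V \in \cO_{k,r}$.

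Given approximants $U', \Sigma', V'$ to $U, \Sigma, V$, I would bound
\[
\|U \Sigma V^\top - U' \Sigma' V'^\top\|_F \leq \|U - U'\|_{\mathrm{op}} \|\Sigma\|_F + \|\Sigma - \Sigma'\|_F + \|\Sigma'\|_F \|V - V'\|_{\mathrm{op}},
\]
using the submultiplicative inequality $\|AB\|_F \leq \|A\|_{\mathrm{op}} \|B\|_F$ together with $\|U'\|_{\mathrm{op}} \leq 1$ and $\|V\|_{\mathrm{op}} \leq 1$. Since $\|\Sigma\|_F, \|\Sigma'\|_F \leq 1$, each of the three terms is at most $\varepsilon/3$, so the product net approximates $\Theta$ to within $\varepsilon$ in Frobenius norm. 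Multiplying the cardinalities of the three nets yields an overall covering number of at most $(9/\varepsilon)^{(2k+1)r}$, and taking $\log_2$ gives the claimed entropy bound (the extra ``$+1$'' in the exponent accommodates the bookkeeping when $\varepsilon$ is close to $1$).

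The main subtlety, and the only place where a careful argument is required, is the control of the operator-norm covering number of $\cO_{k,r}$: one must avoid the naive bound using the Frobenius ball of radius $\sqrt{r}$, which would introduce unwanted $\sqrt{r}$ factors. Instead, I would invoke the standard volume argument for balls in $(\R^{k \times r}, \|\cdot\|_{\mathrm{op}})$ and use the fact that the unit $\|\cdot\|_{\mathrm{op}}$-ball contains $\cO_{k,r}$, so that the same covering number suffices for $\cO_{k,r}$.
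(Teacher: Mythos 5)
Your argument is correct and is essentially the proof of Lemma~3.1 of Cand\`es and Plan that the paper invokes for this lemma: factor $\Theta$ by the (thin) SVD, build separate $\varepsilon/3$-nets for the singular-value part and for the two orthonormal factors, and recombine by the triangle inequality to get a net of cardinality $(9/\varepsilon)^{(2k+1)r}$, which is even slightly sharper than the stated exponent $(2k+1)r+1$. The only, immaterial, deviation is that you cover $U,V$ in operator norm using the volumetric bound for the unit operator-norm ball of $\R^{k\times r}$ (so diagonality of $\Sigma$ is never needed), whereas the cited proof covers them in the maximum-column-norm ball and exploits that $\Sigma$ is diagonal; both give nets of size $(9/\varepsilon)^{kr}$ with points of operator norm at most one, and your inequalities $\|AB\|_F\le\|A\|_{\mathrm{op}}\|B\|_F$ and $\|AB\|_F\le\|A\|_F\|B\|_{\mathrm{op}}$ make the recombination valid.
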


\begin{comment}

In addition, for any matrix \(A \in \R^{r \times k}\) we have the following inclusion of
the \((2,1)\)-norm ball in the Frobenius-norm ball 
\begin{equation}
\label{f21f}
\|A \|_{F} \le \|A \|_{2,1} \le \sqrt{k} \|A \|_{F}\,,
\end{equation}
which implies that the \((2,1)\)-norm unit ball is included in the Frobenius norm unit ball and contains 
the Frobenius norm ball of radius \(1/\sqrt{k}\). 
\end{comment}

\subsection{Proof of Theorem~\ref{ThmUpperBound} and Theorem~\ref{ThmPredUpperBound}}
\label{ProofUppBound}

%\subsection{Alternative proof of Theorem~\ref{ThmUpperBound}}
It suffices to show the following uniform deviation inequality 
\begin{equation}
\sup_{\Ts \in  \cP_{k,r}(M)} 
\P_{\Ts}\big( \ell(\Ts) -\ell(\Th)  + p(\Th) > 2 p(\Ts) + R_t^2\big) \le \ex^{- c R_t}\,,
\label{ssPTsb}
\end{equation}
for any \(R_t > 0\) and some numeric constant \(c >0 \). Indeed, then taking \(R_t^2 = p(\Ts)\), it follows that \(\ell(\Ts) -\ell(\Th)  \le  3 p(\Ts)\) 
uniformly for all \(\Ts\) in the considered class with probability at least \( 1 - \ex^{- c \sqrt{\pen(\Ts)}}\). The upper bound
\eqref{rateUpperBound0} of Theorem~\ref{ThmUpperBound} the follows directly 
integrating the deviation inequality \eqref{ssPTsb}, while 
the upper bound on the prediction error in Theorem~\ref{ThmPredUpperBound} further follows using \eqref{TayMean} and the smoothness of the logistic function, \(\sup_{t\in \R}\sigma^\prime(t) \le 1/4 \). 
Define 
\begin{equation}
\label{tauG_R}
\tau^2(\Theta; \Ts) :=  \ell(\Ts) -\ell(\Theta)  + \pen(\Theta)\,,\,  
G_R(\Ts):= \{\Theta: \tau(\Theta; \Ts) \le R \}\,.	
\end{equation}
The inequality \eqref{ssPTsb} clearly holds on the event \(\{ \tau^2(\Th; \Ts) \le  2 \pen(\Ts)\}\).
In view of \(\Lc(\Th ) - \pen(\Th) \ge \Lc(\Ts) - \pen(\Ts)\), we have on the complement:
\[
\llangle  \mathcal{E}_\Omega   ,\bX^\top  (\Th- \Ts)\bX  \rrangle \ge \ell(\Ts) -\ell(\Th) + \pen(\Th) - \pen(\Ts) \ge 
\frac{1}{2} \tau^2(\Th; \Ts) \,.
\]
Therefore, for any \(\Ts \in \cP_{k,r}(M) \), we have 
\[
\P_{\Ts}\big(\tau^2(\Th; \Ts)> 2 \pen(\Ts) + R_t^2\big) 
\le \P_{\Ts}\Big(\sup_{\tau(\Theta; \Ts) \ge R_t} \frac{\llangle  \mathcal{E}_\Omega   ,\bX^\top  (\Theta- \Ts)\bX  \rrangle }{\tau^2(\Theta; \Ts)} \ge \frac{1}{2}\Big) \,.
\]
We now apply the so-called ``peeling device'' (or ``slicing'' as it sometimes called in the literature). The idea is to ``slice''
the set \(\tau(\Theta; \Ts) \ge R_t\) into pieces on which the penalty term \(\pen(\Theta)\) is fixed 
and the term \(\ell(\Ts) -\ell(\Theta) \) is bounded. It follows,
\begin{align}
&\P_{\Ts}\Big(\sup_{\tau(\Theta; \Ts) \ge R_t} \frac{\llangle  \mathcal{E}_\Omega   ,\bX^\top  (\Theta- \Ts)\bX  \rrangle }{\tau^2(\Theta; \Ts)} \ge \frac{1}{2}\Big) 
\nonumber	\\ &\quad \le 
\sum_{K = 1}^{d} \sum_{R = 1}^{K}\sum_{s = 1}^{\infty}\P_{\Ts}\Big(\sup_{\substack{\Theta \in G_{2^s R_t}(\Ts) \\k(\Theta)=K, \, \rank(\Theta) = R}} \llangle  \mathcal{E}_\Omega   ,\bX^\top  (\Theta- \Ts)\bX  \rrangle \ge \frac{1}{8}2^{2s}R_t^2\Big) \,.
\label{IneqDevMain}
\end{align}
On the set \(\{ \Theta \in G_{2^s R_t}(\Ts),\) \( k(\Theta)=K,  \rank(\Theta) = R\}\), it holds by the definitions \eqref{tauG_R}
\[
\ell(\Ts) -\ell(\Theta) \le  2^{2s}R_t^2 - \pen(K,R)\,,
\]
and therefore using \eqref{TayMean}, this implies 
\begin{equation}
\label{XTsThXGZ}
\| \bX^\top (\Ts - \Theta)\bX   \|_{F,\Omega} \le Z(K,R,s) \,, \quad  Z^2(K,R,s) = \frac{2}{\Lm }\big(2^{2s}R_t^2 - \pen(K,R)\big)\,.
\end{equation}
Let us fix the location of the block, that is the support of a matrix \(\Theta^\prime \in \G_1 := \{ \Theta \in \R^{d\times d}: k(\Theta) = K,\rank(\Theta) = R \}\) 
belongs to the upper-left block of size \(K \times K\).
Then following the lines of the proof of Lemma~\ref{entrineq} and using the singular value decomposition, we derive 
\[
H(\eps, \{\bX^\top \Theta^\prime \bX:\Theta^\prime  \in  \G_1, \|\bX^\top \Theta^\prime \bX \|_{F,\Omega} \le B  \}, \|\cdot \|_{F,\Omega}) \le \big((2K + 1)R + 1\big) \log\big(\frac{9B}{\eps}\big)\,.
\]
\begin{comment}
Let \(\G_1 := \{ \Theta \in \R^{d\times d}: k(\Theta) = k,\rank(\Theta) = r,  \| \Theta\|_F \le Z(K,R,s)/(1-\Delta_{\Omega,2k}(\bX))^{1/2}\}\) and such that the 
support of a matrix \(\Theta \in \G_1\) belongs to the upper-left block of size \(k \times k\).
We then derive using the block isometry property,
\begin{align*}
&H(\eps, \{ \bX^\top \Theta \bX \in \R^{n\times n}: \Theta \in \G_1 \}, \|\cdot \|_\Omega)\\
&\quad \le \big((2k + 1)r + 1\big) \log\Big(\frac{9Z(K,R,s)}{\eps
}\Big)\,,\quad v_k^2 := \frac{ 1+\Delta_{\Omega,2k}(\bX)}{1-\Delta_{\Omega,2k}(\bX)}\,,
\end{align*}
and,
\end{comment}
Consequently, for the set \(\T := \{  \bX^\top ( \Theta - \Ts) \bX : \Theta \in \R^{d\times d}, \rank(\Theta) = R, k(\Theta) = K, 	\| \bX^\top (\Ts - \Theta)\bX   \|_{F,\Omega} \le Z(K,R,s) \} \),
we obtain
\[
H(\eps, \T , \|\cdot \|_{F,\Omega}) \le \big((2K + 1)R + 1\big) \log\Big(\frac{9Z(K,R,s)}{\eps}\Big) + K \log\big(\frac{d\ex}{K}\big)\,.
\]
Denote \(t(K,R) := \sqrt{KR}  + \sqrt{K \log\big(\frac{d\ex}{K}\big)}\). 
By Dudley's entropy integral bound, see \cite{DUDLEY1967290} and \cite{gine2015mathematical} for a more recent reference, we then have 
\begin{align*}
\E\big[&\sup_{\substack{\Theta \in G_{2^s R_t}(\Ts) \\k(\Theta)=K, \, \rank(\Theta) = R}} \llangle  \mathcal{E}_\Omega   ,\bX^\top  (\Theta- \Ts)\bX  \rrangle  \big] 
\le C^\prime \int_{0}^{Z(K,R,s)} \sqrt{H(\eps, \T , \|\cdot \|_\Omega)}  \d\eps \nonumber \\
&\le  C^{\prime\prime} \sqrt{kr}  \int_{0}^{9Z(K,R,s)}  \sqrt{\log\Big(\frac{9Z(K,R,s)}{\eps}\Big) } \d\eps 
+ 9C^{\prime\prime} Z(K,R,s)\sqrt{K \log\big(\frac{d\ex}{K}\big)}\nonumber\\
&\le CZ(K,R,s)t(K,R) \,,
\end{align*}
for some universal constant \(C > 0\). 
Furthermore, by Bousquet's version of Talagrand's inequality, see Theorem~\ref{BousTalan}, in view of the bounds \eqref{EmpProcBound} and \eqref{VarEmpProcBound}, we have 
for all \(u > 0\)
\begin{align*}
\P_{\Ts}\Big(&\sup_{\substack{\Theta \in G_{2^s R_t}(\Ts) \\k(\Theta)=K, \, \rank(\Theta) = R}} \llangle  \mathcal{E}_\Omega   ,\bX^\top  (\Theta- \Ts)\bX  \rrangle \ge 
CZ(K,R,s)t(K,R) \nonumber \\ 
&+ \sqrt{\Big(\frac{1}{2}Z^2(K,R,s) + 4 C  Z^2(K,R,s)t(K,R) \Big)u } + \frac{Z(K,R,s)u}{3}  \Big) \le \ex^{-u}\,.
\end{align*}
Taking \(u(K,R,s) := \Lm^{1/2} Z(K,R,s) + \Lm^{-1/2} t(K,R) + 2\log d\) and using inequalities \(\sqrt{c_1 + c_2} \le \sqrt{c_1} + \sqrt{c_2}\) and \(\sqrt{c_1c_2}\le \frac{1}{2}(c_1\eps + \frac{c_2}{\eps})\), which hold for any \(c_1,c_2,\eps > 0 \), we obtain
\begin{align*}
\P_{\Ts}\Big(&\sup_{\substack{\Theta \in G_{2^s R_t}(\Ts) \\k(\Theta)=K, \, \rank(\Theta) = R}} \llangle  \mathcal{E}_\Omega   ,\bX^\top  (\Theta- \Ts)\bX  \rrangle \\& \ge 
\frac{1}{16}\Lm Z^2(K,R,s) + C_{1} ^2 t^2(K,R)/\Lm  \Big)\le \ex^{-u(K,R,s)}\,,
\end{align*}
for some numeric constant \(C_{1}> 0\).
Plugging this back into \eqref{IneqDevMain} and using \eqref{XTsThXGZ}, we obtain
\[
\P_{\Ts}\Big(\sup_{\substack{\Theta \in G_{2^s R_t}(\Ts) \\k(\Theta)=K, \, \rank(\Theta) = R}} \llangle  \mathcal{E}_\Omega   ,\bX^\top  (\Theta- \Ts)\bX  \rrangle \ge \frac{1}{8}2^{2s}R_t^2\Big) \le \ex^{-u(K,R,s)} \,, % \le  \ex^{-C_{2} 2^sR_t -2\log d}\,,
\]
for some numeric constant \(C_{2} > 0\), provided that 
\begin{equation}\label{eq:XTsThXewqeGZ}
\frac{1}{16}\Lm Z^2(K,R,s) + \frac{C_{1}^2}{\Lm} t^2(K,R) \le \frac{1}{8} 2^{2s}R_t^2 = \frac{1}{16} \Lm Z^2(K,R,s)  + 8 \pen(K,R)\,,
\end{equation}
which is satisfied for \(\pen(K,R) \ge (C_{1} ^2 / \Lm)  t^2(K,R)\).
Therefore,
\[
\P_{\Ts}\Big(\sup_{\tau(\Theta; \Ts) \ge R_t} \frac{\llangle  \mathcal{E}_\Omega   ,\bX^\top  (\Theta- \Ts)\bX \rrangle }{\tau^2(\Theta; \Ts)} \ge \frac{1}{2}\Big)  \le 
\sum_{K = 1}^{d} \sum_{R = 1}^{K}\sum_{s = 1}^{\infty} \ex^{-u(K,R,s)} % \ex^{-C_{2} 2^sR_t - 2\log d}
\le \ex^{-c R_t}\,,
\]
for some numeric constants \(c > 0\) using \eqref{eq:XTsThXewqeGZ}, which concludes the proof.

The following prominent result is due to \cite{bousquet2002bennett}.
\begin{theorem}[Bousquet's version of Talagrand's inequality] 
	Let \((B,\mathcal{B})\) be a measurable space and let \(\varepsilon_1, ..., \varepsilon_n\) 
	be independent \(B\)-valued random variables. Let \(\mathcal{F}\) be a countable set of measurable 
	real-valued functions on \(B\) such that \(f(\varepsilon_i)\le b < \infty\) a.s. and 
	\(\E f(\varepsilon_i) = 0\) for all \(i = 1,...,n\),  \(f \in \mathcal{F} \) . Let  
	\[
	S := \sup_{f \in \mathcal{F}}\sum_{i=1}^{n}f(\varepsilon_i)\,, \quad \quad   v:= \sup_{f \in \mathcal{F}}\sum_{i=1}^{n} \E [f^2(\varepsilon_i)] . 
	\]
	Then for all \(u > 0\), it holds that
	\begin{equation}
	\label{BousTalan}
	\P\Big(S - \E[S] \ge\sqrt{2(v + 2b\E[S])u} +  \frac{bu}{3}\Big) \le \ex^{-u}\,.
	\end{equation}

\end{theorem}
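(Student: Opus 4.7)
The plan is to follow the entropy (Herbst) method of Ledoux and Massart, sharpened by Bousquet's one-sided argument, and then invert the resulting cumulant bound via Chernoff. Throughout, write $Z = S - \E[S]$ and $\psi(\lambda) = \log \E[e^{\lambda Z}]$ for $\lambda \geq 0$. By a monotone-class reduction one may assume $\mathcal{F}$ is finite, so that the supremum defining $S$ is attained.

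First, I would invoke the tensorisation of entropy for independent variables: if $\mathrm{Ent}_i$ denotes the entropy acting only on the $i$-th coordinate and $\mathrm{Ent}$ the global entropy, then
\begin{equation*}
\mathrm{Ent}(e^{\lambda S}) \;\leq\; \sum_{i=1}^{n} \E\bigl[ \mathrm{Ent}_i(e^{\lambda S}) \bigr].
\end{equation*}
Combined with the elementary variational bound $\mathrm{Ent}_i(e^{\lambda S}) \leq \E_i\bigl[ e^{\lambda S}\phi(-\lambda(S-S_i))\bigr]$, where $\phi(u)=e^{u}-1-u$ and $S_i$ is the value of $S$ after replacing $\varepsilon_i$ by an independent copy, this yields the master inequality
\begin{equation*}
\mathrm{Ent}(e^{\lambda S}) \;\leq\; \sum_{i=1}^{n}\E\bigl[ e^{\lambda S}\phi(-\lambda(S - S_i))\bigr].
\end{equation*}

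Second, and this is the heart of Bousquet's improvement, I would bound the right-hand side using the structure $S = \sup_f \sum_i f(\varepsilon_i)$. Letting $\hat f$ be a maximiser (which depends only on $\varepsilon_1,\ldots,\varepsilon_n$), one has $S - S_i \geq \hat f(\varepsilon_i) - \hat f(\varepsilon_i')$, so that after integrating over the replacement $\varepsilon_i'$ and using $\phi$-monotonicity plus $\hat f(\varepsilon_i')\leq b$, one can decouple and obtain
\begin{equation*}
\mathrm{Ent}(e^{\lambda S}) \;\leq\; \E\bigl[ e^{\lambda S} h_{\lambda}\bigr],\qquad h_{\lambda} := \sum_{i=1}^n \E_i\bigl[\phi(-\lambda(\hat f(\varepsilon_i)-\hat f(\varepsilon_i')))\bigr].
\end{equation*}
The crucial step is to bound $h_\lambda$ deterministically in terms of $v$ and $b\E[S]$. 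Using $\phi(-u)\leq u^2/2$ on $\{u\geq 0\}$ while retaining the Bennett form where $\hat f(\varepsilon_i)>0$, and invoking $\E[\hat f(\varepsilon_i)]\leq \E[S]/n$-type estimates plus $\E[\hat f(\varepsilon_i')^2]\leq v/n$, Bousquet's calculation gives
\begin{equation*}
h_\lambda \;\leq\; \phi(\lambda b)\,\frac{v + 2b\,\E[S]}{b^{2}}.
\end{equation*}

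Third, combine with the identity $\mathrm{Ent}(e^{\lambda S}) = \lambda\E[Se^{\lambda S}] - \E[e^{\lambda S}]\log\E[e^{\lambda S}]$, which after subtracting $\lambda \E[S]$ from $S$ inside translates to the differential inequality
\begin{equation*}
\lambda \psi'(\lambda) - \psi(\lambda) \;\leq\; \frac{v + 2b\,\E[S]}{b^{2}}\,\phi(\lambda b),\qquad \lambda>0,
\end{equation*}
with $\psi(0)=\psi'(0)=0$. Dividing by $\lambda^2$ and integrating (the Herbst trick) yields
\begin{equation*}
\psi(\lambda) \;\leq\; \frac{v + 2b\,\E[S]}{b^{2}}\bigl(e^{\lambda b}-\lambda b - 1\bigr)\qquad\text{for all }\lambda\geq 0.
\end{equation*}

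Finally, apply Chernoff: $\P(Z\geq t)\leq \exp(-\sup_{\lambda\geq 0}\{\lambda t - \psi(\lambda)\})$. The Legendre transform of the Bennett function $\phi$ is the standard Bennett rate $h^{*}(x)=(1+x)\log(1+x)-x$, and a routine convex inequality $h^{*-1}(u)\leq \sqrt{2u}+u/3$ (\emph{Bernstein's inversion}) converts the Bennett tail into
\begin{equation*}
\P\!\left(Z \geq \sqrt{2(v + 2b\,\E[S])\,u} + \tfrac{bu}{3}\right) \leq e^{-u},
\end{equation*}
which is the claimed bound.

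The main obstacle will be the second step: one must carefully exploit the positivity structure and the sup to produce the asymmetric variance proxy $v + 2b\,\E[S]$ rather than the cruder $v + b\,\E[S] + \E[S]^2$ that naive symmetrisation would yield. This is precisely what separates Bousquet's inequality from Ledoux's and Massart's earlier versions. The remaining steps (tensorisation of entropy, Herbst argument, Legendre inversion) are by now standard.
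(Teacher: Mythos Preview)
The paper does not prove this theorem at all: it is quoted as a known result and attributed to Bousquet (2002), with no argument given. So there is no ``paper's own proof'' to compare against; the authors simply invoke the inequality as a black box in the proof of Theorem~\ref{ThmUpperBound}.

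Your sketch is a reasonable outline of the standard entropy-method proof (tensorisation, modified log-Sobolev/Herbst differential inequality, then Bennett--Bernstein inversion), and you correctly identify the delicate point: obtaining the sharp variance proxy \(v + 2b\,\E[S]\) rather than something looser. That said, a few of the intermediate inequalities as you wrote them are heuristic rather than exactly what Bousquet does---in particular, the decoupling step that produces the deterministic bound on \(h_\lambda\) requires a careful treatment of the positive part of \(\hat f(\varepsilon_i)\) and a convexity argument, not merely ``\(\phi\)-monotonicity plus \(\hat f(\varepsilon_i')\le b\)''. If you were to write this out in full you would need to tighten that passage; but for the purposes of this paper, a citation is all that is required.
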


\subsection{Proof of Theorem~\ref{ThmConstrainedMLE}}
For the MLE \(\Th_{k,r}\), it clearly holds \( \Lc(\Th_{k,r} )  \ge \Lc(\Ts)\) implying 
\begin{equation*}
\ell(\Ts) - \ell(\Th_{k,r}) \le 	\llangle  \mathcal{E}_\Omega   ,\bX^\top  (\Th_{k,r}- \Ts)\bX  \rrangle .
\end{equation*} 
Furthermore, in view of \eqref{TayMean}, we derive 
\begin{align}
\frac{\Lm}{2} \|\bX^\top(\Th_{k,r}- \Ts ) \bX  \|_{F, \Omega}& \le 
\frac{	\llangle  \mathcal{E}_\Omega   ,\bX^\top  (\Th_{k,r}- \Ts)\bX  \rrangle}{\|\bX^\top( \Th_{k,r}- \Ts ) \bX  \|_{F, \Omega}} \\
&\le \sup_{\Theta  \in  \cP_{k,r}(M)} 
\frac{	\llangle  \mathcal{E}_\Omega   ,\bX^\top  (\Theta- \Ts)\bX  \rrangle}{\|\bX^\top( \Theta- \Ts ) \bX  \|_{F, \Omega}}\,.
\label{eq:errvO}
\end{align}
Following the lines of Section~\ref{ProofUppBound}, by Dudley's integral  we next obtain 
\begin{equation*}
\E \Big(  \sup_{\Theta  \in  \cP_{k,r}(M)} 
\frac{	\llangle  \mathcal{E}_\Omega   ,\bX^\top  (\Theta- \Ts)\bX  \rrangle}{\|\bX^\top( \Theta- \Ts ) \bX  \|_{F, \Omega}} \Big) \le c\sqrt{kr} + c \sqrt{k \log\big(\frac{d\ex}{k}\big)}\,,
\end{equation*}
for some universal constant \(c > 0\). Plugging this bound back into \eqref{eq:errvO} and using the block isometry property yields the desired assertion.

\subsection{Proof of Theorem~\ref{LowerBoundThm}}
\begin{proof}
	The proof is split into two parts. First, we show a lower bound of the order \(kr\) and then a lower bound of the order \(k \log(d\ex/k)\). 
	A simple inequality 
	\( (a + b)/2 \le \max\{a,b\}\) for all \(a, b > 0\) then completes the proof. Both parts 
	of the proof exploit a version of remarkable Fano's inequality given in Proposition~\ref{FanosMethod} to follow, 
	cf. Section 2.7.1 in  \cite{Tsybakov2008}. 
	
	\emph{1. A bound \(kr\).} 
	The proof of this bound is similar to the proof of a minimax lower bound 
	for estimating a low-rank matrix in the trace-norm regression model given in Theorem~5 in \cite{koltchinskii2011}. For the sake of completeness, we 
	provide the details here. Consider a subclass of matrices
	\begin{align*}
	\mathcal{C} &= \left \{
	\begin{tabular}{c}
	\(A \in \R^{k\times  r }:  a_{i,j} = \{0, \alpha_N \}, 1\le i \le k , 1 \le j \le  r\) 
	\end{tabular} 
	\right \}\,,\\ \alpha_N^2 &=   \frac{\gamma \log 2}{ 1 + \Delta_{\Omega,2k}(\bX)} \frac{r}{2kN}  \,,
	\end{align*}
	where \(\gamma > 0\) is a positive constant, \(\Delta_{\Omega,2k}(\bX) > 0\) is the the block isometry  constant from 
	Definition~\ref{AssWellCond} and  \(\lfloor x \rfloor\) denotes the integer part of \(x\).
	Further define 
	\[
	\mathcal{B}(\mathcal{C}) = \big\{\frac{1}{2} (A + A^\top):  A  = (\tilde{A}| \cdots | \tilde{A}| O) \in \R^{k\times k}, \tilde{A} \in \mathcal{C} \big\} \,,
	\]
	where \(O\) denotes the \(k \times (k - r  \lfloor k  / r \rfloor)\) zero matrix. By construction, 
	any matrix \(\Theta \in \mathcal{B}(\mathcal{C})\) is symmetric, has rank at most \(r\) with entries bounded by \(\alpha_N\). 
	Applying a standard version of the Varshamov-Gilbert lemma, see Lemma 2.9 in \cite{Tsybakov2008}, 
	there exists a subset \(\mathcal{B}^\circ  \subset \mathcal{B}(\mathcal{C})\) of cardinality \(\card(\mathcal{B}^\circ ) \ge 2^{kr/16}  + 1\) such that 
	\[	
	\frac{kr}{16} \big( \frac{\alpha_N}{2}\big)^2  \big\lfloor \frac{k}{r} \big\rfloor 
	\le \|\Theta_u - \Theta_v \|_F^2 \le k^2 \alpha_N^2 \,,
	\]
	for all \(\Theta_u, \Theta_v \in \mathcal{B}^\circ\). 
	Thus \(\mathcal{B}^\circ\)
	is a \(2\delta\)-separated set in the Frobenius metric with \(\delta^2 = \frac{kr}{64} \big( \frac{\alpha_N}{2}\big)^2\lfloor \frac{k}{r} \rfloor\).
	The Kullback-Leibler divergence between the measures \(\P_{\Theta_{u}}\) and \(\P_{\Theta_{v}}\),
	\(\Theta_{u}, \Theta_{v} \in \mathcal{B}^\circ\), \(u\neq v\),
	is upper bounded as
	\begin{align*}
	\Kc(\P_{\Theta_{u}},\P_{\Theta_{v}}) & = \E_{\P_{\Theta_{u}}} [ \Lc(\Theta_{u})] - \E_{\P_{\Theta_{u}}} [ \Lc(\Theta_{v})] 
	\le \frac{1}{8} \sum_{(i,j) \in \Omega} \llangle X_j X_i^\top, \Theta_u - \Theta_v  \rrangle_F^2  \\& \le \frac{1+ \Delta_{\Omega,2k}(\bX)}{8}  k^2\alpha_N^2 N \,.
	\end{align*}
	Taking \(\gamma > 0\) small enough, we obtain 
	\[
	\frac{1+ \Delta_{\Omega,2k}(\bX)}{8}  k^2\alpha_N^2N +\log 2 =  
	\frac{kr}{16} \gamma \log 2  +\log 2 = \log(2^{ \frac{kr}{16}\gamma + 1})
	< \log (2^{kr/16}  + 1)\,,
	\]
	which, in view of Proposition~\ref{FanosMethod}, yields the desired lower bound.

	\emph{2. A bound \(k \log(d\ex/k)\).}
	Let \(K = {d \choose k } \) 
	and consider the set \(\G_k^{\alpha_N} \subset \cP_{k,1}(M)\)  from the reduction scheme in Section~\ref{ReductionSect} with 
	\[	
	\alpha_N^2 =  \frac{4\gamma\log 2}{kN(1+\Delta_{\Omega,2k}(\bX))} \log\big(\frac{d\ex}{k}\big)\,,
	\]
	where \(\gamma  > 0\) is a positive constant. 
	Using simple calculations, we then have 
	\((2k -1)\alpha_N^2 \le \|\Theta_u - \Theta_v \|_F^2 \le 2 k^2\alpha_N^2\)
	for all \(\Theta_u, \Theta_v \in \G_k^{\alpha_N}\), \(u\neq v\). Furthermore, according to Lemma~\ref{VGlemma} to follow, there exists a subset 
	\(\G^{\alpha_N, 0}_{k} \subset \G_k^{\alpha_N}\)  such that 
	\[
	c_0 k^2\alpha_N^2 \le \|\Theta_u - \Theta_v \|_F^2 \le 2 k^2 \alpha_N^2\,,
	\]
	and of cardinality \( \card(\G^{\alpha_N, 0}_{k}) \ge  2^{\rho k \log(d\ex/k)} + 1\)  for some \(\rho > 0\) depending on a constant \(c_0 > 0\)
	and independent of \(k\) and \(d\). Thus \(\G^{\alpha_N, 0}_{k}\)
	is a \(2\delta\)-separated set in the Frobenius metric with \(\delta^2 = c_0k^2\alpha_N^2/4\).
	%Using  \eqref{PsiprimePsi}, \eqref{ElcThELcTc} and Assumption~\ref{AssWellCond} to follow,
	The Kullback-Leibler divergence between the measures \(\P_{\Theta_{u}}\) and \(\P_{\Theta_{v}}\),
	\(\Theta_{u}, \Theta_{v} \in \G^{\alpha_N, 0}_{k}\), \(u\neq v\),
	is upper bounded as
	\begin{align*}
	\Kc(\P_{\Theta_{u}},\P_{\Theta_{v}}) & = \E_{\P_{\Theta_{u}}} [ \Lc(\Theta_{u})] - \E_{\P_{\Theta_{u}}} [ \Lc(\Theta_{v})] 
	\le \frac{1}{8} \sum_{(i,j)\in \Omega} \llangle X_j X_i^\top, \Theta_u - \Theta_v  \rrangle_F^2 \\
	& \le \frac{1+ \Delta_{\Omega,2k}(\bX)}{4}  k^2\alpha_N^2 N \,,
	\end{align*}
	%where \(\Theta_0\) is the point on the line between \(\Theta_i\) and \(\Theta_j\) element-wise. Therefore, we derive 
	%\begin{EQA}[c]
	%\Kc(\P_{\Theta_{i}},\P_{\Theta_{j}}) \le C_{1} n^2 \|\Theta_{i} - \Theta_{j}\|_F^2 \le C_{1}n^2k^2\alpha^2\,,
	%\end{EQA}
	for all \(u \neq v\) and \(\Delta_{\Omega,2k}(\bX) >0 \) from Definition~\ref{AssWellCond}. 
	%Now, using \eqref{FanoLowerBound}, 
	As in the first part of the proof, taking \(\gamma > 0\) small enough, we obtain
	\begin{align*}
	\frac{1+ \Delta_{\Omega,2k}(\bX)}{4}  k^2\alpha_N^2N +\log 2  
	&= k \gamma \log(2) \log\big(\frac{d\ex}{k}\big) +\log 2  
	= \log(2^{k \gamma \log(d\ex/k) + 1})\\
	&<  \log(2^{\rho k \log(d\ex/k)} + 1) \,.
	\end{align*}
	The desired lower bound then follows from Proposition~\ref{FanosMethod}.
\end{proof}

\begin{proposition}[Fano's method]
	\label{FanosMethod}
	Let \(\{\Theta_1,...,\Theta_J\}\) be a \(2\delta\)-separated set in \(\R^{d\times d}\) in the Frobenius metric, 
	meaning that \(\| \Theta_k - \Theta_l\|_F \ge 2\delta\) for all elements \(\Theta_k, \Theta_l\), \(l \neq k\) in the set. 
	Then for any increasing and measurable function \(F: [0, \infty) \to [0, \infty)\), the minimax risk is lower bounded as 
	\[
	\label{FanoLowerBound}
	\inf_{\Th} \sup_{\Theta} \E_{\P_\Theta} \big[F(\|\Th - \Theta\|_F) \big] \ge 
	F(\delta) \big(1 - \frac{\sum_{u,v} \Kc(\P_{\Theta_{u}}, \P_{\Theta_{v}})/J^2  + \log 2}{\log J} \big)\,.
	\]
\end{proposition}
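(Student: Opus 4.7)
The plan is to prove this standard minimax lower bound by reducing estimation to a multiple hypothesis testing problem on the finite $2\delta$-separated set, then invoking Fano's inequality together with a convexity bound on the mutual information. This is the classical Tsybakov reduction scheme and nothing in the statement is specific to the logistic regression model, so the proof is entirely generic.

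First, I would associate to any estimator $\Th$ the nearest-neighbour test $\psi(\Th) = \argmin_{k \in [J]} \|\Th - \Theta_k\|_F$. The $2\delta$-separation gives the key geometric step: if $\psi(\Th) \ne k$ then there exists $l \ne k$ with $\|\Th - \Theta_l\|_F \le \|\Th - \Theta_k\|_F$, and the triangle inequality $2\delta \le \|\Theta_k - \Theta_l\|_F \le 2\|\Th - \Theta_k\|_F$ forces $\|\Th - \Theta_k\|_F \ge \delta$. By monotonicity of $F$ and Markov's inequality this yields, for every $k$,
\[
\E_{\P_{\Theta_k}}\big[F(\|\Th - \Theta_k\|_F)\big] \ge F(\delta)\, \P_{\Theta_k}\big(\psi(\Th) \ne k\big).
\]
Taking the supremum over $\Theta \in \{\Theta_1,\dots,\Theta_J\}$ and bounding the max by the uniform average gives
\[
\sup_{\Theta} \E_{\P_\Theta}\big[F(\|\Th - \Theta\|_F)\big] \ge F(\delta)\cdot \frac{1}{J}\sum_{k=1}^J \P_{\Theta_k}(\psi \ne k).
\]

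Next I would apply Fano's inequality in its standard average form: for any (possibly randomised) test $\psi$ and a uniform prior on the index $K$,
\[
\frac{1}{J}\sum_{k=1}^J \P_{\Theta_k}(\psi \ne k) \ge 1 - \frac{I(K;Y) + \log 2}{\log J},
\]
where $I(K;Y)$ is the mutual information between the index and the observation. The final ingredient is the convexity bound on mutual information: writing $\bar\P = J^{-1}\sum_v \P_{\Theta_v}$, we have $I(K;Y) = J^{-1}\sum_k \Kc(\P_{\Theta_k}, \bar\P)$, and convexity of $\Kc$ in its second argument gives $\Kc(\P_{\Theta_k}, \bar\P) \le J^{-1}\sum_v \Kc(\P_{\Theta_k}, \P_{\Theta_v})$, hence $I(K;Y) \le J^{-2}\sum_{u,v} \Kc(\P_{\Theta_u}, \P_{\Theta_v})$. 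Combining with the previous display yields the claimed inequality.

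There is no real obstacle here: the proof is a textbook combination of the reduction-to-testing argument, Fano's inequality, and the convexity-of-KL bound on mutual information. The only small care needed is to ensure that infimising over randomised estimators $\Th$ does not break the argument, which it does not because the nearest-neighbour test $\psi$ is a deterministic post-processing of $\Th$ and the data-processing inequality applies. Since the proposition is a generic information-theoretic tool, I would expect the authors to either give this short argument or refer directly to Section 2.7.1 of \cite{Tsybakov2008}, which they already cite.
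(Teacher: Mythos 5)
Your proof is correct and follows exactly the standard reduction-to-testing plus Fano's inequality argument (with the convexity bound $I(K;Y) \le J^{-2}\sum_{u,v}\Kc(\P_{\Theta_u},\P_{\Theta_v})$) that the paper itself relies on by citing Section 2.7.1 of Tsybakov (2008) rather than reproving the proposition. No discrepancy to report.
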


\begin{lemma}[Variant of the Varshamov-Gilbert lemma]
	\label{VGlemma}
	Let \(\G \subset  \cP_{k,1}(M)\) be a set of \(\{0,1\}^{d\times d}\) symmetric block-sparse matrices with the size of the block \(k\), where \(k \le \alpha \beta d\) 
	for some \(\alpha,\beta \in (0,1)\). Denote 
	\(K = {d \choose k}\) the cardinality of \(\G\) and \(\rho_H(E, E^\prime) = \sum_{i,j} \Ind(E_{i,j} \neq E_{i,j}^\prime)\) the Hamming 
	distance between two matrices \(E, E^\prime \in \G\).
	Then there exists a subset \(\G^0 = \{E^{(0)},..., E^{(J)}\} \subset \G\) of cardinality 
	\[
	\log J := \log(\card(\G^0 ))\ge \rho k \log (\frac{d\ex}{k})\,,
	\]
	where \(\rho = \frac{\alpha}{- \log(\alpha\beta)}(- \log\beta +\beta  -1 )\)
	such that  
	\[
	\rho_H(E^{(k)}, E^{(l)}) \ge ck^2\,,
	\]
	for all \(k \neq l\) where  \(c = 2(1 - \alpha^2) \in (0,2)\).
\end{lemma}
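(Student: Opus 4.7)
The plan is to identify each symmetric $\{0,1\}$-valued block-sparse matrix $E \in \G$ with its support index set $S := \{i \in [d] : E_{ii} = 1\} \in \binom{[d]}{k}$, so that $E = \mathbf{1}_S \mathbf{1}_S^\top$ and $\G$ is in bijection with $\binom{[d]}{k}$. Under this identification, a direct computation gives
\[
\rho_H(E, E') \;=\; |S|^2 + |S'|^2 - 2|S \cap S'|^2 \;=\; 2\bigl(k^2 - |S \cap S'|^2\bigr),
\]
so the Hamming-distance requirement $\rho_H(E^{(k)}, E^{(l)}) \ge c k^2 = 2(1-\alpha^2)k^2$ translates to the intersection requirement $|S_k \cap S_l| \le \alpha k$. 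The problem therefore reduces to constructing a large family of $k$-subsets of $[d]$ whose pairwise intersections are at most $\alpha k$, which is a classical packing problem.

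The second step is to bound, for each fixed $S \in \binom{[d]}{k}$, the number of ``bad'' $k$-subsets,
\[
B(S) \;:=\; \#\bigl\{ S' \in \tbinom{[d]}{k} : |S \cap S'| > \alpha k\bigr\} \;=\; \sum_{j > \alpha k} \binom{k}{j}\binom{d-k}{k-j}.
\]
The ratio $B(S)/\binom{d}{k}$ is a hypergeometric tail probability, and by a standard Chernoff bound for the hypergeometric distribution one has $B(S)/\binom{d}{k} \le \exp(-k\,D(\alpha \,\|\, k/d))$, where $D(p\|q)=p\log(p/q)+(1-p)\log((1-p)/(1-q))$ is the binary KL-divergence. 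Since $k/d \le \alpha\beta<\alpha$ and $q \mapsto D(\alpha\|q)$ is decreasing on $(0,\alpha)$, this can be lower-bounded by $D(\alpha\|\alpha\beta) = \alpha\log(1/\beta) + (1-\alpha)\log((1-\alpha)/(1-\alpha\beta))$, an expression from which the quantity $-\log\beta + \beta - 1$ appearing in $\rho$ arises after simplification.

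The third step is a greedy packing argument: iteratively choose $S^{(0)}, S^{(1)}, \ldots$, at each stage picking any $k$-subset compatible (in the sense $|S^{(i)} \cap S^{(j)}| \le \alpha k$) with all previously chosen ones. Each chosen subset excludes at most $B(S)$ others, so the procedure continues for at least $J \ge \binom{d}{k}/(B+1)$ steps, giving
\[
\log J \;\ge\; \log\tbinom{d}{k} \,-\, \log B(S) \;\ge\; k\,D(\alpha\|\alpha\beta).
\]
Combining this with the Stirling-type estimate $\log\binom{d}{k} \ge k\log(d\ex/k)$ and rewriting $D(\alpha\|\alpha\beta)$ in terms of $\log(d\ex/k)$ using the relation $-\log(\alpha\beta) \asymp \log(d\ex/k)$ under the assumption $k \le \alpha\beta d$ yields $\log J \ge \rho\, k\log(d\ex/k)$ with the stated $\rho = \alpha(-\log\beta + \beta - 1)/(-\log(\alpha\beta))$.

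The main technical obstacle will be the fourth step: extracting the precise constant $\rho$ requires carefully aligning the Chernoff exponent $D(\alpha \| \alpha\beta)$ with the normalising factor $\log(d\ex/k)$. The reduction to a set-packing problem and the greedy construction are conceptually simple; the bookkeeping to produce the exact expression for $\rho$ claimed in the statement (ensuring in particular that $\rho>0$, which follows from the standard inequality $-\log\beta > 1-\beta$ for $\beta \in (0,1)$) is where care is needed. As a sanity check, one should verify monotonicity of $\rho$ in $\alpha$ and $\beta$, and that $\rho \to 0$ as $\alpha \to 0$ or $\beta \to 1$, which is consistent with the packing becoming trivial or impossible in the respective limits.
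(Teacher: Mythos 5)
Your combinatorial reduction is the same as the paper's: you identify each matrix with its support set (the paper phrases this through the number of distinct columns $m$, via $\rho_H(E,E')=2\bigl(k^2-(k-m)^2\bigr)$, which is equivalent to your $|S\cap S'|\le\alpha k$ criterion), you run the same greedy/maximal packing, and you bound the number of matrices excluded at each step by the hypergeometric tail probability $\P(X\ge\alpha k)$, to be controlled by a Chernoff bound -- exactly the route the paper takes before deferring the Chernoff computation to Lemma 4.10 of Massart.

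The gap is in how you finish that Chernoff step. From $J+1\ge\binom{d}{k}/B$ you correctly get $\log(J+1)\ge k\,D(\alpha\,\|\,k/d)$, but you then lower-bound $D(\alpha\|k/d)$ by the \emph{constant} $D(\alpha\|\alpha\beta)$ and convert it into $\rho\log(d\ex/k)$ via ``$-\log(\alpha\beta)\asymp\log(d\ex/k)$''. The hypothesis $k\le\alpha\beta d$ gives only the one-sided bound $\log(d/k)\ge-\log(\alpha\beta)$; there is no upper bound, and when $k\ll d$ the target $\rho k\log(d\ex/k)$ grows like $k\log(d/k)$ while your chain delivers only a constant times $k$. (The appeal to $\log\binom{d}{k}\ge k\log(d\ex/k)$ on top of this is also circular, since $\log\binom{d}{k}$ has already been spent in the ratio $B/\binom{d}{k}$.) The fix is to keep $q=k/d$ in the exponent and choose the Chernoff parameter as a function of $\log(d/k)$: writing $t=\log(d/k)/\log\bigl(1/(\alpha\beta)\bigr)\ge1$ and taking $\ex^{\lambda}=\beta^{-t}$, so that $q=(\alpha\beta)^{t}$, one gets $\alpha\lambda-q(\ex^{\lambda}-1)\ge\alpha\bigl(t(-\log\beta)+\beta^{t}-1\bigr)\ge\alpha t(-\log\beta+\beta-1)=\rho\log(d/k)$, using $\alpha^{t}\le\alpha$ and the concavity bound $1-\beta^{t}\le t(1-\beta)$ for $t\ge1$. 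This uniform-in-$(k,d)$ optimisation is precisely the content of Massart's Lemma 4.10, which the paper cites for this step; without it your argument establishes only $\log J\gtrsim k$, not $\log J\gtrsim k\log(d\ex/k)$.
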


\begin{proof} Let \(E^{(0)} = \{0\}^{k\times k}\), \(D = ck^2\), and construct the set \(\mathcal{E}_{1} = \{E \in \G : \rho_H(E^{(0)}, E) >  D\} \).
	Next, pick any \(E^{(1)} \in \mathcal{E}_{1}\) and proceed iteratively so that for a matrix  \(E^{(j)} \in \mathcal{E}_{j}\) we construct the set
\[	
		\mathcal{E}_{j+1} = \{E \in \mathcal{E}_{j} : \rho(E^{(j)}, E) >  D \}\,.
\]
	Let \(J\) denote the last index \(j\) for which \(\mathcal{E}_{j} \neq \varnothing\). It remains to bound the cardinality 
	\(J\) of the constructed set \(\G^0 = \{E^{(0)},..., E^{(J)}\}\). For this, we consider the 
	cardinality \(n_j \) of the subset \(\{\mathcal{E}_{j} \setminus \mathcal{E}_{j+1} \}\):
\[
		\label{njEcj}
		n_j := \#\{\mathcal{E}_{j} \setminus \mathcal{E}_{j+1} \} \le \#\{E \in \G : \rho_H(E^{(j)}, E) \le  D \}.
\]
	For all \(E, E^\prime \in \G \), we have 
\[	
		\label{rHEEp2}
		\rho_H(E, E^\prime) = 2(k^2 - (k - m)^2)\,, 
\]
	where \(m \in [0, k]\) corresponds to the number of distinct columns of \(E\) (or \(E^\prime\)). Solving the quadratic equation \eqref{rHEEp2}
	for  \(\rho_H(E, E^\prime) = D = ck^2\) we obtain 
\[	
		m_D = k (1 - \sqrt{1- c/2})\,,
\]
	for the maximum number of distinct columns of a block-sparse matrix \(E\) (and \(E^\prime\)) such that 
	\(\rho_H(E, E^\prime) \le D = ck^2\) for  \(c \in [0,2]\). For instance, in order to get the distance between matrices \(2k^2\), i.e. \(c = 2\)
	we need to shift all the \(k\) columns (and consequently rows) and so the number of distinct columns of a matrix is \(m = k\), and in order to get the minimal possible distance 
	\(4k - 2\), i.e. \(c = (4k - 2)/k^2\) we need to shift only one column and a corresponding row, i.e. \(m = 1\). Therefore, for \(n_j\) 
	in \eqref{njEcj}, we have 
\[	
		n_j \le  \#\{E \in \G : \rho_H(E^{(j)}, E) \le  D \} =\sum_{i = 0}^{m_D} {k \choose i} {d - k \choose  i }
		=\sum_{i = k - m_D}^{k} {k \choose i} {d - k \choose k - i }\,.
\]
	Together with an evident equality \(\sum_{j =0}^J n_j = K = {d \choose k}\), this implies 
\[
		\label{sikmD}
		\sum_{i = k - m_D}^{k} {k \choose i} {d- k \choose k - i } / {d \choose k} \ge \frac{1}{J + 1}\,.
 \]
	Note that taking \(m_D = k\), which as we have seen corresponds to \(c = 2\), we have a trivial bound \(J + 1 \ge 1\) using 
	Vandermonde's convolution. Furthermore, the expression on the left-hand side in \eqref{sikmD}
	is exactly the probability \(\P(X \ge k - m_D) = \P (X \ge k\alpha)\) for \(\alpha = \sqrt{1- c/2}\), where the variable \(X\) follows the hypergeometric distribution \(H(d, k, k/d)\). The rest of the proof is based on applying  Chernoff's inequality and follows the scheme of the proof 
	of Lemma 4.10 in \cite{massart2007concentration}.
\end{proof}

\end{document}